\documentclass[11pt]{amsart}
\usepackage{amsmath, amssymb, array}
\usepackage[top=30truemm,bottom=30truemm,left=30truemm,right=30truemm]{geometry} 
\usepackage{mathtools}
\usepackage[all]{xy}
\usepackage{ascmac}
\usepackage[dvips]{graphicx}
\usepackage{color}
\usepackage{amscd}
\usepackage{fancyhdr}
\usepackage[bookmarks=false]{hyperref}
\usepackage{amsrefs}
\usepackage{cleveref}
\usepackage{chngcntr}
\usepackage{apptools}
\usepackage{comment}

\usepackage[OT2,OT1]{fontenc} 
\newcommand\cyr 
{
\renewcommand\rmdefault{wncyr} 
\renewcommand\sfdefault{wncyss} 
\renewcommand\encodingdefault{OT2}
 \normalfont 
 \selectfont 
 } 
 \DeclareTextFontCommand{\textcyr}{\cyr}

\newcommand{\disc}{\mathrm{disc}}

\newcommand{\fn}{\footnote}
\newcommand{\fnm}{\footnotemark}





\DeclareMathOperator{\Res}{Res} 

\newmuskip\pFqmuskip

\newcommand*\pFq[6][8]{%
  \begingroup 
  \pFqmuskip=#1mu\relax
  \mathchardef\normalcomma=\mathcode`,
  \mathcode`\,=\string"8000
  \begingroup\lccode`\~=`\,
  \lowercase{\endgroup\let~}\pFqcomma
  {}_{#2}F_{#3}{\left[\genfrac..{0pt}{}{#4}{#5};#6\right]}%
  \endgroup
}
\newcommand{\pFqcomma}{{\normalcomma}\mskip\pFqmuskip}

\title{A generalization of formulas for the discriminants of quasi-orthogonal polynomials with applications to hypergeometric polynomials}

\author{Hideki Matsumura$^{*}$}
\email{hidekimatsumura@keio.jp}
\address{${}^*$Department of Mathematics, Faculty of Science and Technology, Keio University, 3-14-1 Hiyoshi , Kohoku, Yokohama, Kanagawa, Japan}

\thanks{This research is supported by KAKENHI 18H05233.}
 
\subjclass[2010]{primary 12E10; secondary 33C05; tertiary 33C45}
\keywords{discriminant, resultant, quasi-orthogonal polynomial, recurrence relation, differential equation, hypergeometric polynomial} 
\date{\today}

\pagestyle{plain} 

\theoremstyle{plain}
 \newtheorem{theorem}{Theorem}[section] 
  \crefname{theorem}{Theorem}{Theorems}
 \newtheorem{proposition}[theorem]{Proposition}
 \crefname{proposition}{Proposition}{Propositions}
 
 \crefname{lemma}{Lemma}{Lemmas}
 \newtheorem{corollary}[theorem]{Corollary}
  \crefname{corollary}{Corollary}{Corollaries}
 
   \crefname{conjecture}{Conjecture}{Conjectures}
 
 \crefname{question}{Question}{Questions}
 
   \crefname{problem}{Problem}{Problems}
 
    \crefname{notation}{Notation}{Notations}
 
\theoremstyle{definition} 
 \newtheorem{definition}[theorem]{Definition}
  \crefname{definition}{Definition}{Definitions}
 \newtheorem{example}[theorem]{Example}
   \crefname{example}{Example}{Examples}
 \newtheorem{remark}[theorem]{Remark}
   \crefname{remark}{Remark}{Remarks}
    
   \crefname{claim}{Claim}{Claims}

\begin{document}


\maketitle

\tableofcontents

\begin{abstract}
Let $K$ be a field.
In this article, we derive a formula for the discriminant of a sequence $\{r_{A,n}+c r_{A,n-1}\}$ of polynomials.
Here, $c \in K$ and $\{r_{A,n} \}$ is a sequence of polynomials satisfying a certain recurrence relation that is considered by Ulas or Turaj.
There are several works calculating the discriminants of given polynomials.
For example, Kaneko--Niiho and Mahlburg--Ono independently proved the formula for the discriminants of certain hypergeometric polynomials that are related to
$j$-invariants of supersingular elliptic curves.
Sawa--Uchida proved the formula for the discriminants of quasi-Jacobi polynomials.
In this article, we present a uniform way to prove a vast generalization of the above formulas.
In the proof, we use the formulas for the resultants $\Res(r_{A,n},r_{A,n-1})$ by Ulas and Turaj that are generalizations of Schur's classical formula for the resultants.
\end{abstract}

\section{Introduction}

Let $K$ be a field and $f(x)$, $g(x) \in K[x]$.
The resultant of $f$ and $g$ is zero if and only if $f$ and $g$ have a common factor of positive degree.
On the other hand, the discriminant of a polynomial $f$ is zero if and only if $f$ has a multiple root.

There are several works calculating resultants and discriminants of given polynomials.
For details, see \cite[\S1]{Turaj}, \cite[\S 1]{Ulas} and the references therein. 
For example, Schur \cite{Schur} proved a formula for the resultant $\Res(r_n,r_{n-1})$ 
of a sequence $\{r_n(x) \}$ of polynomials defined by the following recurrence relation:

$r_0(x)=1$, $r_1(x)=a_1x+b_1$,
\[r_n(x)=(a_nx+b_n)r_{n-1}(x)-c_n r_{n-2}(x) \; (n \geq 2),\]
where, $a_n$, $b_n$, $c_n \in \mathbb{C}$ with $a_n c_n \neq 0$.
Then the resultant of $r_n$ and $r_{n-1}$ is given by
\[\Res(r_n,r_{n-1})=(-1)^{\frac{n(n-1)}{2}} \prod_{i=1}^{n-1} a_i^{2(n-i)}c_{i+1}^i. \]

Schur's result is applied for calculation of the discriminants of orthogonal polynomials and quasi-orthogonal polynomials.
For example, Sawa--Uchida \cite[Theorem 3.1]{SU19} proved the formula for the discriminants of quasi-Jacobi polynomials by using Schur's result and differential equations of Jacobi polynomials.

Ulas \cite[Theorem 3.1]{Ulas} generalized Schur's result to a sequence of polynomials $\{r_{A,n} \}$ defined by a certain recurrence relation, where $A$ is a certain tuple of integers.
He considered the case when $r_{A,n}$ is not necessarily of degree $n$.
His result is recently generalized by Turaj \cite[Theorem 3.1]{Turaj}.

Kaneko--Niiho \cite[Theorem 1]{KN2006} gave the formula for the discriminant of certain hypergeometric polynomials.
The hypergeometric polynomials considered there are rerated to $j$-invariants of supersingular elliptic curves (see \cite{KZ1995}).
A similar formula is independently discovered by Mahlburg--Ono \cite[Theorem 3.1]{MO2004}, where modified polynomials are considered.
They derived the formula for the discriminants of the following hypergeometric polynomials for $r \in \{0,4,6,10 \}$:
\begin{align} \label{MOpoly}
V_r(n;x):=\sum_{m=0}^n c_r(m,n)x^n :=x^n \pFq[4]{2}{1}{-n, n+\beta_r}{\gamma_r}{\frac{2}{x}},
\end{align}
where 
\begin{align*} 
\beta_r:=\frac{r+1}{6}
\end{align*}
and
\begin{align*} 
\gamma_r:=
\begin{cases}
\frac{3}{2} & (r=0,6),\\
\frac{4}{3} & (r=4,10).
\end{cases}
\end{align*}
In \cite{MO2004}, $V_r(n;x)$ is denoted by $B_r(n;x)$.

In the proofs of \cite{KN2006,MO2004}, they used the recurrence formula and contiguous relations of hypergeometric polynomials.

In this article, we present a uniform way to prove a vast generalization of both \cite[Theorem 3.1]{MO2004} and \cite[Theorem 3.1]{SU19}. More precisely,
we prove a formula for the discriminant (\cref{MT}) of a sequence of polynomials of the form $\{r_{A,n}+cr_{A,n-1} \}$, where $c \in K$ and $\{r_{A,n} \}$ is a sequence of polynomials considered by Ulas or Turaj.
This article is organized as follows:

In \S2, we recall the definitions of resultants and discriminants, and
in \S 3, we recall the results of \cite{Ulas,Turaj} (\cref{Ulas,Turaj}).
In \S4, we prove our main theorem (\cref{MT}) by using \cref{Ulas,Turaj}. 
In \S 5, we present some applications to hypergeometric polynomials including \cite[Theorem 3.1]{MO2004}. 

\section{Resultants and discriminants}

In this section, we briefly recall the definitions of resultants and discriminants.

Let $K$ be a field and
\begin{align*}
f(x) &=a_nx^n+ \cdots +a_0 \in K[x],\\
g(x) &= b_mx^m+ \cdots +b_0 \in K[x]
\end{align*}
be polynomials. 

\begin{definition} [{Resultant}]
The resultant of $f$ and $g$ is defined by
\[\Res(f,g):=a_0^mb_0^n \prod_{i=1}^n \prod_{j=1}^m (\alpha_i - \beta_j).\]
Here, $\alpha_1, \ldots, \alpha_n$ are the roots of $f$, and $\beta_1, \ldots, \beta_m$ are the roots of $g$.
\end{definition}

The resultant is the determinant of the $(m+n) \times (m+n)$ matrix
\[\begin{pmatrix}
a_n & a_{n-1} & \hdots & a_0 &   & \\
\text{\huge{0}} & \ddots & \ddots & & \ddots & \text{\huge{0}}\\
& & a_n & a_{n-1} & \hdots & a_0\\
b_m & \hdots &  b_1 & b_0 & & \\
\text{\huge{0}}
& \ddots &  & \ddots & \ddots &  \text{\huge{0}}
\\
 & & b_m & \hdots &  b_1 & b_0
\end{pmatrix}.\]
In the proof of \cref{MT}, we use the following formula:
\begin{proposition} [{cf.\ \cite[eq.\ (5)]{Turaj}}] \label{res}
\[\Res(f,g)=a_n^m \prod_{i=1}^n g(\alpha_i)=(-1)^{nm} b_m^n \prod_{j=1}^m f(\beta_j).\]
\end{proposition}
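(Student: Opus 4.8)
The plan is to deduce both equalities directly from the product expression for the resultant, after factoring $f$ and $g$ over an algebraic closure $\overline{K}$ of $K$. Write $f(x)=a_n\prod_{i=1}^n(x-\alpha_i)$ and $g(x)=b_m\prod_{j=1}^m(x-\beta_j)$ in $\overline{K}[x]$, where $\alpha_1,\dots,\alpha_n$ and $\beta_1,\dots,\beta_m$ are the roots of $f$ and $g$ counted with multiplicity, so that $\Res(f,g)=a_n^m b_m^n\prod_{i=1}^n\prod_{j=1}^m(\alpha_i-\beta_j)$.

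For the first identity, I would evaluate $g$ at each root of $f$: substituting $x=\alpha_i$ into $g(x)=b_m\prod_{j=1}^m(x-\beta_j)$ gives $g(\alpha_i)=b_m\prod_{j=1}^m(\alpha_i-\beta_j)$. Multiplying these over $i=1,\dots,n$ yields $\prod_{i=1}^n g(\alpha_i)=b_m^n\prod_{i=1}^n\prod_{j=1}^m(\alpha_i-\beta_j)$, so multiplying through by $a_n^m$ recovers precisely $\Res(f,g)$.

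For the second identity I would argue symmetrically, evaluating $f$ at each root of $g$: $f(\beta_j)=a_n\prod_{i=1}^n(\beta_j-\alpha_i)=(-1)^n a_n\prod_{i=1}^n(\alpha_i-\beta_j)$, using the $n$ sign flips $\beta_j-\alpha_i=-(\alpha_i-\beta_j)$. Taking the product over $j=1,\dots,m$ gives $\prod_{j=1}^m f(\beta_j)=(-1)^{nm}a_n^m\prod_{i=1}^n\prod_{j=1}^m(\alpha_i-\beta_j)$; multiplying by $(-1)^{nm}b_m^n$ and using $(-1)^{2nm}=1$ then again produces $\Res(f,g)$.

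I do not expect a genuine obstacle: the argument is essentially bookkeeping, and the only points deserving attention are the accounting of the $nm$ sign changes in the second computation and the (harmless) remark that the asserted identity is a polynomial relation in the coefficients, hence may be verified over $\overline{K}$, where the needed factorizations into linear factors exist. One could instead start from the Sylvester determinant and carry out the usual row and column reductions, but the root-evaluation route above is the shortest one and suffices for the use made of \cref{res} in the proof of \cref{MT}.
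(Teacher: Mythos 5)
Your argument is correct and is the standard root-evaluation proof; the paper itself states \cref{res} without proof, merely citing \cite{Turaj}, so there is no competing argument in the text, and your bookkeeping of the $(-1)^{nm}$ sign in the second identity is right. One point worth making explicit: the paper's definition of the resultant as printed reads $a_0^m b_0^n \prod_{i=1}^n\prod_{j=1}^m(\alpha_i-\beta_j)$, with constant terms rather than leading coefficients; this is evidently a typo for $a_n^m b_m^n$, and your proof (correctly) works from the standard normalization $\Res(f,g)=a_n^m b_m^n\prod_{i=1}^n\prod_{j=1}^m(\alpha_i-\beta_j)$, which is the one required for the stated identities to hold.
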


\begin{definition} [{discriminant}]
The discriminant of a polynomial $f$ is defined by
\[\disc(f):=a_n^{2n-2} \prod_{1 \leq i<j \leq n} (\alpha_i-\alpha_j)^2. \]
Here, $\alpha_1, \ldots, \alpha_n$ are the roots of $f$.
\end{definition}

Discriminants and resultants are related as follows:
\[\disc(f)=\frac{(-1)^{\frac{n(n-1)}{2}}}{a_n} \Res(f,f').\]
Here $f'$ is the derivative of $f$.

\section{Previous works}
In this section, we recall the notations and the formula for the resultants of \cite{Ulas,Turaj}.
Let $K$ be a field.

\subsection{Ulas' formula}

Ulas \cite{Ulas} generalized Schur's formula as follows:

Let $\mathcal{A}:=\{(i,j,k,l) \in \mathbb{Z}_{\geq 0}^4 \mid i \leq j, \; k \geq l \}$.
For $A \in \mathcal{A}$, let $\{r_{A,n}\}$ be the sequence of polynomials on $K$ defined as follows: 
\begin{align*}
r_{A,0}(x) &=\sum_{s=0}^i p_sx^s, \; r_{A,1}(x)=\sum_{s=0}^j q_sx^s, \\
r_{A,n}(x) &= f_n(x) r_{A,n-1}(x)- v_nx^l r_{A,n-2}(x) \; (n \geq 2),
\end{align*}
where
\[f_n(x)=\sum_{s=0}^k a_{n,s}x^s. \]
Here, we assume that $p_s$, $q_s$, $v_n$, $a_{n,s} \in K$.
We also assume that 
$p_iq_ja_{n,k} \neq 0$ for all $n \in \mathbb{Z}_{\geq 2}$ 
and $a_{2,k}q_j - v_2 p_i \neq 0$.
Note that these conditions mean that the leading coefficients of $r_{A,n}(x)$ (resp.\ $f_n(x)$) do not vanish for all $n$ (resp.\ $n \in \mathbb{Z}_{\geq 2}$). 

\cite{Ulas} proved the following formula for the resultant:

\begin{theorem} [{\cite[Theorem 3.1]{Ulas}}] \label{Ulas}
Let $R_n:=\Res(r_{A,n},r_{A,n-1})$, $L_n$ and $C_n$ be the leading coefficient and the constant term of $r_{A,n}$ respectively.
\fn{In what follows, we may drop $A$ from the notation because we fix $A$. For example, we write $R_n$ instead of $R_{A,n}$.}
Then
\begin{enumerate}
\item For all $n \geq 2$, $\deg(r_{A,n})=(n-1)k+j$.
\item \begin{align*}
R_n
=& (-1)^{\sum_{u=2}^n e_A(u)} \left(\prod_{u=2}^n v_u^{(u-2)k+j} L_u^{2k-l} C_u^l \right) R_1\\
=&(-1)^{\sum_{u=2}^n e_A(u)} T_A^{(2k-l)(n-2)}q_0^{l(n-1)}q_j^{k+j-l-i} \left(\prod_{u=0}^{n-2} v_{u+2}^{uk+j} \right) \\
&\cdot \left(\prod_{s=1}^{n-1} a_{s+1,0}^{l(n-s-1)} a_{s+1,k}^{(2k-l)(n-s-1)} \right) R_1, 
\end{align*}
where $e_A(u)=((u-2)k+j)((u-1)k+j+1+l)$ and
\begin{align*}
T_A= \begin{cases}
q_j & (i+l < j+k),\\ 
 \frac{a_{2,k}q_j-v_2p_i}{a_{2,k}} & (i+l = j+k). 
\end{cases}
\end{align*}
\end{enumerate}
\end{theorem}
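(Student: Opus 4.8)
The plan is to prove the two assertions of \cref{Ulas} simultaneously by induction on $n$, exploiting the recurrence $r_{A,n}=f_n r_{A,n-1}-v_nx^l r_{A,n-2}$ together with the multiplicative formula for the resultant in \cref{res}. First I would establish part (1): by hypothesis $\deg r_{A,1}=j$ and $\deg f_n=k$ for $n\geq 2$; the term $f_2 r_{A,1}$ has degree $k+j$ while $v_2x^l r_{A,0}$ has degree $l+i$, and the assumption $a_{2,k}q_j-v_2p_i\neq 0$ guarantees that the leading coefficient of $r_{A,2}$ does not cancel when $i+l=j+k$ (and there is nothing to cancel when $i+l<j+k$, which forces $i+l\neq j+k$ to be the only other case since $i\le j$, $k\ge l$), so $\deg r_{A,2}=k+j$. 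For $n\geq 3$ the term $f_n r_{A,n-1}$ has degree $k+((n-2)k+j)=(n-1)k+j$, strictly larger than $\deg(v_nx^l r_{A,n-2})=l+((n-3)k+j)$ because $2k\geq k\geq l$ and $k\ge 0$; since $a_{n,k}L_{n-1}\neq 0$ the leading term survives, giving $\deg r_{A,n}=(n-1)k+j$. This also yields the explicit leading coefficient $L_n=a_{n,k}L_{n-1}$, hence $L_n = q_j\prod_{u=2}^n a_{u,k}$, and $T_A$ is precisely $L_2/a_{2,k}\cdot$(adjustment) — more precisely one checks $L_2=a_{2,k}T_A$ in both cases.

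Next, for the resultant recursion, I would apply \cref{res} to $\Res(r_{A,n},r_{A,n-1})$. Writing $\alpha_1,\dots$ for the roots of $r_{A,n-1}$, each such root satisfies $r_{A,n}(\alpha)=-v_n\alpha^l r_{A,n-2}(\alpha)$ because the $f_n r_{A,n-1}$ term vanishes. Using $\Res(r_{A,n},r_{A,n-1})=(-1)^{\deg r_{A,n}\cdot\deg r_{A,n-1}}L_{n-1}^{\deg r_{A,n}}\prod_\alpha r_{A,n}(\alpha)$ and substituting, one gets a product of three factors: $(-v_n)^{\deg r_{A,n-1}}$, $\prod_\alpha \alpha^l = \bigl((-1)^{\deg r_{A,n-1}}C_{n-1}/L_{n-1}\bigr)^l$ via Vieta, and $\prod_\alpha r_{A,n-2}(\alpha)$, which up to the explicit leading-coefficient and sign bookkeeping is $\pm\Res(r_{A,n-2},r_{A,n-1})=\pm R_{n-1}$. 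Collecting the powers of $v_n$, $L_{n-1}$, $C_{n-1}$ and carefully tracking the sign exponent — here the exponent $e_A(u)$ should emerge as the sum of the various $(-1)$-contributions: the $(-1)^{\deg r_{A,n}\deg r_{A,n-1}}$ from \cref{res}, the $(-1)^{l\deg r_{A,n-1}}$ from Vieta, the sign in $(-v_n)^{\deg r_{A,n-1}}$, and the $(-1)^{\deg r_{A,n-1}\deg r_{A,n-2}}$ from rewriting $\prod r_{A,n-2}(\alpha)$ as a resultant — gives the recursion
\[
R_n=(-1)^{e_A(n)}v_n^{(n-2)k+j}L_n^{2k-l}C_n^l R_{n-1}.
\]
Iterating this from $n$ down to $2$ produces the first displayed formula.

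Finally, to obtain the second, fully explicit displayed formula, I would substitute the closed forms $L_u=q_j\prod_{t=2}^u a_{t,k}$ and $C_u=r_{A,u}(0)$. The constant term satisfies $C_u=a_{u,0}C_{u-1}$ for $u\geq 3$ (since the $x^l$ factor kills the $r_{A,u-2}$ contribution when $l\geq 1$; the case $l=0$ must be handled separately but then the $C_u^l$ factors are all $1$ and no constant-term bookkeeping is needed) with $C_2=a_{2,0}q_0$ when $l\geq 1$, so $C_u=q_0\prod_{t=2}^u a_{t,0}$; substituting and re-indexing the products (collecting for each $s$ the total multiplicity with which $a_{s+1,k}$ and $a_{s+1,0}$ appear as $u$ ranges over $s+1,\dots,n$) converts $\prod_u L_u^{2k-l}C_u^l$ into $T_A^{(2k-l)(n-2)}q_0^{l(n-1)}q_j^{k+j-l-i}\prod_s a_{s+1,0}^{l(n-s-1)}a_{s+1,k}^{(2k-l)(n-s-1)}$, where the exponent $k+j-l-i$ on $q_j$ and the definition of $T_A$ absorb the $u=2$ boundary term (this is exactly where the two cases of $T_A$ are forced). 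The main obstacle throughout is the sign bookkeeping: verifying that the accumulated $(-1)$-exponents collapse to $\sum_{u=2}^n e_A(u)$ with $e_A(u)=((u-2)k+j)((u-1)k+j+1+l)$ requires a careful parity computation modulo $2$, reconciling the four sources of signs listed above, and this is the step I expect to be the most delicate.
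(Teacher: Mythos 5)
The paper itself does not prove \cref{Ulas}: it is imported from \cite{Ulas}, with only sign corrections recorded in the remark that follows it. Your reconstruction follows exactly the route that remark reveals Ulas to have taken: reduce $r_{A,n}$ modulo $r_{A,n-1}$ via the recurrence, evaluate the resultant as a product over the roots of $r_{A,n-1}$ using \cref{res}, extract the powers of $v_n$, the constant term via Vieta, and $R_{n-1}$, then iterate and substitute closed forms for $L_u$ and $C_u$. The skeleton is right, and the degree argument in part (1) is fine (except that your ``strictly larger'' claim for $n\ge 3$ degenerates when $k=l=0$, a case the stated hypotheses do not visibly exclude).

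Three pieces of bookkeeping, however, would produce a wrong formula if executed as written. (i) The one-step recursion should read $R_n=(-1)^{e_A(n)}v_n^{\deg(r_{A,n-1})}L_{n-1}^{2k-l}C_{n-1}^{l}R_{n-1}$: your own computation produces $L_{n-1}$ and $C_{n-1}$ (as your preceding sentence says), and your later substitutions $L_u=q_j\prod_{t\le u}a_{t,k}$, $C_u=q_0\prod_{t\le u}a_{t,0}$ only yield the exponents $a_{s+1,k}^{(2k-l)(n-s-1)}$, $a_{s+1,0}^{l(n-s-1)}$, $q_0^{l(n-1)}$ of the second displayed formula (and Schur's $a_i^{2(n-i)}$ in the special case) if the iterated product runs over $L_1,\dots,L_{n-1}$ and $C_1,\dots,C_{n-1}$; writing $L_n^{2k-l}C_n^l$ shifts every exponent by one. (ii) The step $n=2$ is not an instance of the uniform recursion: the exponent of $L_1=q_j$ there is $\deg(r_{A,2})-l-\deg(r_{A,0})=k+j-l-i$, not $2k-l$. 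This is the analogue of $\gamma_A(d+1)$ versus $\gamma_A(n)$ for $n\ge d+2$ in \cref{Turaj}, and it is precisely the origin of the factor $q_j^{k+j-l-i}$ and of the case distinction defining $T_A$; you acknowledge a ``$u=2$ boundary term'' only when converting to the second formula, but it must already be built into the recursion before iterating, otherwise the first displayed identity is off by $q_j^{\,j-i-k}$. (iii) In your list of sign sources, the contribution $(-1)^{\deg(r_{A,n-1})\deg(r_{A,n-2})}$ is spurious: the first identity of \cref{res} gives $\prod_{\alpha}r_{A,n-2}(\alpha)=R_{n-1}/L_{n-1}^{\deg(r_{A,n-2})}$ with no sign when the product is over the roots of $r_{A,n-1}$, and the remaining three sources already sum exactly to $e_A(n)=\deg(r_{A,n-1})(\deg(r_{A,n})+1+l)$; including the fourth would spoil the parity. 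With these repairs the argument closes.
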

Note that Schur's formula is the case $A=(0,1,1,0)$.

\begin{remark} \label{ijkl}
If $L_n \neq 0$, then \cref{Ulas} holds under a little more weaker assumption. In fact, it holds for $A=(i,j,k,l) \in \mathbb{Z}_{\geq 0}^{4}$ such that $i \leq j$, $i+l \leq j+k$ and $l \leq 2k$
which we need for application to \cite[Theorem 3.1]{MO2004}. 
Indeed, the condition on $A$ is only used for the calculation of the degree of $r_{A,n}$ ($\deg{r_{A,n}}=(n-1)k+j$ also holds for such $A$ if $L_n \neq 0$) 
and the expression of $L_n$ (see \cite[Proof of Theorem 3.1]{Ulas}).
Note that we do not need an explicit expression of $L_n$ for $l=2k$ since the exponent of $L_n$ in the formula is $0$.
\end{remark}

\begin{remark}
In \cref{Ulas}, we corrected some errors in \cite{Ulas}.

In \cite[Theorem 3.1]{Ulas}, $e(A)$ should be $((u-2)k+j)((u-1)k+j+1\mbox{\boldmath $+l$})$ by the following reason:

By \cref{res}, the right hand side of \cite[eq.\ (2.2)]{Ulas} should be
$(-1)^{nm} b_m^n \prod_{i=1}^m F(\beta_i)$.
Therefore, in the proof of \cite[Theorem 3.1]{Ulas},
\begin{enumerate}
\item p.\ 5, ll. 14--15: 
\[\Res(r_{A,1},x)^l=(-1)^{l \deg(r_{A,1})} r_{A,1}(0)^l=(-1)^{jl}  q_0^l.\]
\item p.\ 5, ll. 22--23: 
\begin{align*}
\Res(r_{A,n-1},-v_n x)^l &=(-1)^{l \deg(r_{A,n-1})} (-v_n)^{\deg(r_{A,n-1})} r_{A,n-1}(0)^l\\
&=(-1)^{((n-2)k+j)l} (-v_n)^{(n-2)k+j} C_{n-1}^l.
\end{align*}
\end{enumerate}
\end{remark}

\subsection{Turaj's formula}
Let $d \in \mathbb{Z}_{\geq 0}$.
Turaj \cite{Turaj} generalized \cref{Ulas} as follows:

Let $\mathcal{A}:=\{(i_0, i_1, \ldots i_d,k,l,m) \in \mathbb{Z}_{\geq 0}^{d+4} \mid i_0 \leq i_1 \leq \ldots \leq i_d , \; k \geq l, \; m \neq 0 \}$.
For $A \in \mathcal{A}$, let $\{r_{A,n}\}$ be the sequence of polynomials on $K$ defined by as follows: 
\begin{align*}
r_{A,0}(x) &=\sum_{s=0}^{i_0} p_{s,0}x^s, \; r_{A,1}(x)=\sum_{s=0}^{i_1} p_{s,1} x^s, \ldots, r_{A,d}(x)=\sum_{s=0}^{i_d} p_{s,d} x^s, \\
r_{A,n}(x) &= g_n(x) r_{A,n-1}^m(x)+\sum_{|\alpha|<m} t_{\alpha,n}(x) {\mathbf r}_{A,n}^{\alpha}(x) r_{A,n-1}(x)+v_nx^l r_{A,n-2}^m(x) \; (n \geq d+1),
\end{align*}
where $\alpha=(\alpha_0, \ldots, \alpha_d) \in \mathbb{Z}_{\geq 0}^{d+1}$, $|\alpha|=\alpha_0+ \cdots +\alpha_d$, $v_n \in K$, 
$g_n(x)$, $t_{\alpha,n}(x)$, ${\mathbf r}_{A,n}^{\alpha}(x) \in K[x]$ and
\begin{align*}
{\mathbf r}_{A,n}^{\alpha}(x) &=r_{A,n-1}^{\alpha_0}(x)r_{A,n-2}^{\alpha_1}(x) \cdots r_{A,n-d-1}^{\alpha_d}(x),\\
g_n(x) &=\sum_{s=0}^k a_{s,n}x^s.
\end{align*}
Assume that
$t_{\alpha,n}(0)=0$,
$\deg(t_{\alpha,n})<\deg(g_n)$ and
$a_{k,n} \prod_{s=0}^{d} p_{i_s,s} \neq 0$ for all $n \in \mathbb{Z}_{\geq 0}$.
Moreover, if $i_d=i_{d-1}$ and $k=l$, then 
$a_{k,d+1}p_{i_d,d}^m + v_{d+1} p_{i_{d-1},d-1}^m \neq 0$.
Note that these conditions mean that the leading coefficients of $g_n(x)$ and $r_{A,n}(x)$ do not vanish for all $n$. 

\begin{theorem} [{\cite[Theorem 3.1]{Turaj}}] \label{Turaj}
\begin{enumerate}
\item For all $n \geq d+1$,
\[\deg(r_{A,n})=k \sum_{s=0}^{n-d-1} m^s+i_d m^{n-d} .\]

\item Let $R_n:=\Res(r_{A,n},r_{A,n-1})$. 
Then
\begin{align*}
R_n=(-1)^{\sum_{s=d+1}^n m^{n-s} e_A(s)}  \left(\prod_{s=d+1}^n L_{s-1}^{\gamma_A(s)} v_s^{\deg(r_{A,s-1})} C_{s-1}^l \right)^{m^{n-s}} R_d^{m^{n-d}},
\end{align*}
where
\begin{align*}
e_A(n) &= (\deg(r_{A,n}) +l) \deg(r_{A,n-1}),\\
\gamma_A(n) &= \deg(r_{A,n})- \deg(v_nx^l r_{A,n-2}) \\
&=\begin{cases}
k-l+m(i_d -i_{d-1}) & (n=d+1),\\
m^{n-d-1}(k+i_d(m-1)) +k-l &(n \geq d+2),
\end{cases}\\
C_n&=\begin{cases}
1 & (l=0),\\
p_{0,d}^{m^{n-d}}  \prod_{s=1}^{n-d} a_{0,d+s}^{m^{n-d-s}}  &(l>0),
\end{cases}\\
L_n&=\begin{cases}
(a_{k,d+1}p_{i_d,d}^m+ v_{d+1}p_{i_{d-1},d-1}^m)^{m^{n-d-1}} \prod_{s=2}^{n-d} a_{k,d+s}^{m^{n-d-s}} & (i_d=i_{d-1} \land k=l),\\
p_{i_d,d}^{m^{n-d}} \prod_{s=1}^{n-d} a_{k,d+s}^{m^{n-d-s}} &(\mbox{otherwise}).
\end{cases}
\end{align*}
\end{enumerate}
\end{theorem}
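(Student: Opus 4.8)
The plan is to prove the two parts together by induction on $n$, in two stages. First I would determine the ``shape'' of $r_{A,n}$ --- its degree $D_n:=\deg r_{A,n}$, its leading coefficient $L_n$ and its constant term $C_n$ --- which already contains part~(1) and the $L_n,C_n$ formulas of part~(2); then I would use the resultant identity \cref{res} to extract a one-step recursion for $R_n$ and unroll it.

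\emph{Stage 1 (shape of $r_{A,n}$).} For $n=d$ one reads off $D_d=i_d$, $L_d=p_{i_d,d}$, $C_d=p_{0,d}$ directly. For $n\ge d+1$ I would compare the degrees of the three groups of terms on the right of the recurrence. The term $g_nr_{A,n-1}^m$ has degree $k+mD_{n-1}$ and leading coefficient $a_{k,n}L_{n-1}^m$; the middle sum $\sum_{|\alpha|<m}t_{\alpha,n}\mathbf r_{A,n}^{\alpha}r_{A,n-1}$ has \emph{strictly} smaller degree, since $\deg t_{\alpha,n}\le k-1$ while $\deg\bigl(\mathbf r_{A,n}^{\alpha}r_{A,n-1}\bigr)=(\alpha_0+1)D_{n-1}+\alpha_1D_{n-2}+\cdots+\alpha_dD_{n-d-1}\le(|\alpha|+1)D_{n-1}\le mD_{n-1}$, using $|\alpha|<m$ and the (inductively maintained) monotonicity $D_0\le D_1\le\cdots$; and $v_nx^lr_{A,n-2}^m$ has degree $l+mD_{n-2}$. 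Since $k\ge l$ and $D_{n-1}\ge D_{n-2}$ one has $k+mD_{n-1}\ge l+mD_{n-2}$, so generically the leading term of $r_{A,n}$ is that of $g_nr_{A,n-1}^m$, giving $D_n=k+mD_{n-1}$ and $L_n=a_{k,n}L_{n-1}^m$; solving this telescoping recursion with the initial data above yields the closed forms for $D_n$, $\gamma_A(n)=D_n-l-mD_{n-2}$ and $L_n$. The one place requiring care is the borderline configuration in which the two outer terms have the same degree (which, the way the hypotheses are arranged, forces $n=d+1$, $i_d=i_{d-1}$, $k=l$): there the leading coefficients could a priori cancel, and this is exactly what the assumption $a_{k,d+1}p_{i_d,d}^m+v_{d+1}p_{i_{d-1},d-1}^m\neq 0$ forbids --- hence the two cases in the formula for $L_n$. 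Finally, evaluating the recurrence at $x=0$ kills the middle sum (because $t_{\alpha,n}(0)=0$) and, when $l>0$, also kills $v_nx^lr_{A,n-2}^m$, leaving $C_n=a_{0,n}C_{n-1}^m$ with $C_d=p_{0,d}$, whose solution is the displayed product; when $l=0$ the quantity $C_n$ enters the final identity only through $C_{n-1}^l=1$, so the convention $C_n:=1$ is harmless.

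\emph{Stage 2 (one-step recursion for $R_n$).} Let $\beta_1,\dots,\beta_{D_{n-1}}$ be the roots of $r_{A,n-1}$ in $\overline{K}$. By \cref{res},
\[
R_n=(-1)^{D_nD_{n-1}}L_{n-1}^{D_n}\prod_{j=1}^{D_{n-1}}r_{A,n}(\beta_j),\qquad R_{n-1}=\Res(r_{A,n-1},r_{A,n-2})=L_{n-1}^{D_{n-2}}\prod_{j=1}^{D_{n-1}}r_{A,n-2}(\beta_j).
\]
The crucial observation is that at a root $\beta_j$ of $r_{A,n-1}$ the first two groups of terms in the recurrence vanish: $g_nr_{A,n-1}^m$ because $m\ge 1$, and every summand of $\sum_{|\alpha|<m}t_{\alpha,n}\mathbf r_{A,n}^{\alpha}r_{A,n-1}$ because it carries the explicit factor $r_{A,n-1}$. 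Hence $r_{A,n}(\beta_j)=v_n\beta_j^{\,l}\,r_{A,n-2}(\beta_j)^{m}$. Multiplying over $j$ and substituting $\prod_j\beta_j=(-1)^{D_{n-1}}C_{n-1}/L_{n-1}$ (legitimate since $C_{n-1}=r_{A,n-1}(0)$ when $l>0$, while $\beta_j^{\,l}$ plays no role when $l=0$) together with $\prod_jr_{A,n-2}(\beta_j)=R_{n-1}/L_{n-1}^{D_{n-2}}$, then collecting the signs and the powers of $L_{n-1}$, I obtain
\[
R_n=(-1)^{e_A(n)}\,L_{n-1}^{\gamma_A(n)}\,v_n^{\,D_{n-1}}\,C_{n-1}^{\,l}\,R_{n-1}^{\,m}\qquad(n\ge d+1),
\]
where indeed $e_A(n)=(D_n+l)D_{n-1}$ and $\gamma_A(n)=D_n-l-mD_{n-2}$ as in the statement.

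\emph{Stage 3 (unrolling).} Writing $F_s:=(-1)^{e_A(s)}L_{s-1}^{\gamma_A(s)}v_s^{D_{s-1}}C_{s-1}^{l}$ so that $R_n=F_nR_{n-1}^m$, each $F_s$ is raised to the power $m^{n-s}$ by the time the recursion reaches $R_d$, which gives
\[
R_n=\Bigl(\prod_{s=d+1}^{n}\bigl(L_{s-1}^{\gamma_A(s)}\,v_s^{\,\deg r_{A,s-1}}\,C_{s-1}^{\,l}\bigr)^{m^{n-s}}\Bigr)\,(-1)^{\sum_{s=d+1}^{n}m^{n-s}e_A(s)}\,R_d^{\,m^{n-d}},
\]
which is the asserted formula once the closed forms for $D_s$, $\gamma_A(s)$, $L_s$, $C_s$ from Stage~1 are inserted. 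The one genuinely delicate part of the argument is the degree and leading-coefficient bookkeeping of Stage~1 --- isolating the borderline configuration and pinpointing exactly where each non-vanishing hypothesis is used; once part~(1) and the formulas for $L_n,C_n$ are in hand, Stages~2 and~3 are a direct application of \cref{res} followed by the telescoping above.
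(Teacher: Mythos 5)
Your proof is correct and follows essentially the same route as the cited source: the paper itself does not prove this theorem (it quotes \cite[Theorem 3.1]{Turaj}, only correcting signs), but the structure visible from its correction remarks --- determine $\deg(r_{A,n})$, $L_n$, $C_n$ inductively, observe that at a root of $r_{A,n-1}$ the recurrence collapses to $v_nx^lr_{A,n-2}^m$, and evaluate the resultant via \cref{res} (equivalently, multiplicativity of resultants, whence the factors $\Res(r_{A,n-1},x)^l$ appearing in the remarks) --- is exactly your Stages 1--3. The one loose end is in Stage 1: your claim that the hypotheses confine the borderline cancellation to $n=d+1$ fails in the degenerate situation $k=l=0$, where $\deg(r_{A,n-1})=\deg(r_{A,n-2})$ can persist for $n\ge d+2$ (e.g.\ when $m=1$) and $a_{k,n}L_{n-1}^m+v_nL_{n-2}^m$ could vanish; this imprecision is inherited from the theorem's own hypotheses, whose operative content is really the non-vanishing of every $L_n$ (cf.\ the spirit of \cref{ijkl}), so it is worth a remark but does not invalidate your argument in the intended range of parameters.
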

Note that Schur's formula (resp.\ \cref{Ulas}) is the case $d=1$ and $A=(i,j,k,l,m)=(0,1,1,0,1)$ (resp.\ $d=m=1$ and arbitrary $i,j,k,l$).

\begin{remark}
\begin{enumerate}
\item In the recurrence of Ulas' (resp.\ Turaj's) polynomials, note that the sign before $v_nx^lr_{A,n-2}(x)$ (resp.\ $v_nx^lr_{A,n-2}^m(x)$) is $-1$ (resp.\ $+1$).
When we consider Ulas' (resp.\ Turaj's) polynomials, we follow the 
sign of \cite{Ulas} (resp.\ \cite{Turaj}).

\item In \cref{Turaj}, note that $L_n$ is the leading coefficient of $r_n$.
Moreover, if $l>0$, then $C_n$ is the constant term of $r_n$.
We do not need the precise expression of $C_n$ if $l=0$ since the exponent of $C_n$ in the formula is $0$.

\item
In \cref{Turaj}, we corrected some errors in \cite{Turaj}.
\begin{enumerate}
\item In \cite[Theorem 3.1]{Turaj}, $e(A)$ should be $(\deg(r_{A,n})\mbox{\boldmath $+l$})  \deg(r_{A,n-1})$ by the following reason:

By \cref{res},
in the proof of Theorem 3.1,
\begin{itemize}
\item p. 9 ll.\ 14--15: 
$\Res(r_d,x^l)=(-1)^{l \deg(r_d)} r_d(0)^l=(-1)^{i_dl}  C_d^l$.
\item p. 10 ll.\ 8--9: 
$\Res(r_{n-1},x)^l=(-1)^{l \deg(r_{A,n-1})} C_{n-1}^l$.
\end{itemize}
\item $\prod_{x=1}^{n-d-1}$ in $C_n$ for $l>0$ should be $\prod_{x=1}^{n-d}$.
Indeed, in the last equation in p. 8, $\prod_{s=1}^{n-1}$ should be $\prod_{s=1}^n$.
\end{enumerate}
\end{enumerate}
\end{remark}

\section{Main theorem}

Let $K$ be a field, $c \in K$ and $\{r_{A,n}(x)\}$ be a sequence of polynomials considered in \cite{Ulas,Turaj}.
We call $\{r_{A,n}(x)\}$ in \cite{Ulas} (resp.\ \cite{Turaj}) Ulas' (resp.\ Turaj's) polynomials.
In this section, we compute the discriminant of quasi-Ulas' and quasi-Turaj's polynomials $\{r_{A,n}(x)+cr_{A,n-1}(x)\}$ by using \cref{Ulas,Turaj} under certain differential equations. 
This is a generalization of the formula for the discriminants of quasi-Jacobi polynomials \cite[Theorem 3.1]{SU19}.

Write 
\[r_{A,n;c}(x)=L_n x^{d_n}+ \cdots. \]
and let $y_1,\ldots, y_{d_n}$ be the zeros of $r_{A,n;c}(x)=r_{A,n}(x)+cr_{A,n-1}(x)$.
Here, 
$d_n=(n-1)k+j$ for Ulas' polynomials and 
$d_n=k \sum_{s=0}^{n-d-1} m^s+i_d m^{n-d}$ for Turaj's polynomials.

Suppose that there exist polynomials $F(x)$, $G_{1,n}(x)$, $G_{2,n}(x)$, $H_{1,n}(x)$, $H_{2,n}(x) \in K[x]$ such that
\begin{align*} 
F(x)r_{A,n}'(x) &= G_{1,n}(x)r_{A,n}(x)+G_{2,n}(x)r_{A,n-1}(x)\\
&= H_{1,n}(x)r_{A,n}(x)+H_{2,n}(x)r_{A,n+1}(x).
\end{align*}
We assume that $F(y_t) \neq 0$ for all $1 \leq t \leq d_n$.
For the existence of such $\{r_{A,n}\}$, see \cref{Ulaseg,2F1,MO04}.

Here is our main theorem:

\begin{theorem} \label{MT} 
Let $1 \leq t \leq d_n$ and write
\[-H_{2,n-1}(y_t)c^2+ (H_{1,n-1}(y_t)-G_{1,n}(y_t))c+G_{2,n}(y_t)=\sum_{w=0}^e B_{e-w,n}(c) y_t^w\]
with $B_{1,n}(c), \ldots, B_{e,n}(c) \in K[c]$ for all $t$.
Suppose that
\[\sum_{w=0}^e B_{e-w,n}(c) \xi_{A,n;c,t}^w=0.\]
\begin{enumerate} 
\item 
Let $\{r_{A,n}(x) \}$ be Ulas' polynomials, $L_n$ and $C_n$ be the leading coefficient and the constant term of $r_{A,n}$ respectively. 
Then
\begin{align*}
  \disc(r_{A,n;c}) 
    =&(-1)^{\frac{d_n(d_n+2e-1)}{2}+\sum_{u=2}^n e_A(u)}
     B_{0,n}(c)^{d_n} L_n^{d_n-d_{n-1}-3} \left(\prod_{u=2}^n v_{u+1}^{(u-2)k+j}L_u^{2k-l}C_u^l \right)\\
      &\cdot R_1\prod_{\nu=1}^e r_{A,n;c}(\xi_{A,n;c,\nu}) \prod_{t=1}^{d_n} \frac{1}{F(y_t)}\\
    =&(-1)^{\frac{d_n(d_n+2e-1)}{2}+\sum_{u=2}^n e_A(u)} 
     B_{0,n}(c)^{d_n} L_n^{d_n-d_{n-1}-3} T_A^{(2k-l)(n-2)}q_0^{l(n-1)}q_j^{k+j-l-i} \left(\prod_{u=0}^{n-2} v_{u+2}^{uk+j} \right) \\
&\cdot \left(\prod_{s=1}^{n-1} a_{s+1,0}^{l(n-s-1)} a_{s+1,k}^{(2k-l)(n-s-1)} \right) R_1 
    \prod_{\nu=1}^e r_{A,n;c}(\xi_{A,n;c,\nu}) \prod_{t=1}^{d_n} \frac{1}{F(y_t)}.
    \end{align*}
  Here, 
\begin{align*}
d_n &=(n-1)k+j,\\
e_A(u) &=((u-2)k+j)((u-1)k+j+1+l),\\
T_A&= \begin{cases}
 q_j & (i+l < j+k),\\ 
 \frac{a_{2,k}q_i-v_2p_i}{a_{2,k}} & (i+l = j+k).
\end{cases}
\end{align*}

  \item  
  Let $\{r_{A,n}(x) \}$ be Turaj's polynomials, $L_n$ and $C_n$ be the leading coefficient and the constant term of $r_{A,n}$ respectively. Then
            \begin{align*}
  \disc(r_{A,n;c}) 
    =& (-1)^{\frac{d_n(d_n+2e-1)}{2}+\sum_{s=d+1}^n m^{n-s} e_A(s)}  
      B_{0,n}(c)^{d_n} L_n^{d_n-d_{n-1}-3} \left(\prod_{s=d+1}^n L_{s-1}^{\gamma_A(s)} v_s^{\deg(r_{A,s-1})} C_{s-1}^l \right)^{m^{n-s}} \\
    &\cdot R_d^{m^{n-d}}
   \prod_{\nu=1}^e r_{A,n;c}(\xi_{A,n;c,\nu}) \prod_{t=1}^{d_n} \frac{1}{F(y_t)}.
         \end{align*}
  Here, 
  \begin{align*}
  d_n &=k \sum_{s=0}^{n-d-1} m^s+i_d m^{n-d},\\
e_A(n) &= (d_n +l) d_{n-1},\\
\gamma_A(n) &= d_n-d_{n-2}- \deg(v_nx^l) \\
&=\begin{cases}
k-l+m(i_d -i_{d-1}) & (n=d+1),\\
m^{n-d-1}(k+i_d(m-1)) +k-l &(n \geq d+2).
\end{cases}
\end{align*}
\end{enumerate}
\end{theorem}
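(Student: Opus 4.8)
The plan is to reduce the whole computation to the discriminant--resultant identity $\disc(f) = (-1)^{d_n(d_n-1)/2}L_n^{-1}\Res(f,f')$ applied to $f = r_{A,n;c}$, together with repeated use of the two root expansions in \cref{res}. Since $d_n > d_{n-1}$ in both the Ulas and the Turaj setting, the leading coefficient of $r_{A,n;c}$ is $L_n$, so the first form of \cref{res} gives $\Res(f,f') = L_n^{d_n-1}\prod_{t=1}^{d_n} r_{A,n;c}'(y_t)$, and everything comes down to evaluating $\prod_t r_{A,n;c}'(y_t)$.

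The crucial step is to linearise $r_{A,n;c}'$ at the zeros $y_t$. At such a $y_t$ one has $r_{A,n}(y_t) = -c\,r_{A,n-1}(y_t)$. Substituting this into the assumed relation $Fr_{A,n}' = G_{1,n}r_{A,n} + G_{2,n}r_{A,n-1}$, and into the same relation at index $n-1$ written in its $H$-form, $Fr_{A,n-1}' = H_{1,n-1}r_{A,n-1} + H_{2,n-1}r_{A,n}$, and then adding $c$ times the latter to the former, I obtain $F(y_t)\,r_{A,n;c}'(y_t) = P_n(y_t)\,r_{A,n-1}(y_t)$, where $P_n(x) := \sum_{w=0}^e B_{e-w,n}(c)x^w$ is exactly the polynomial appearing in the statement, with leading coefficient $B_{0,n}(c)$ and zeros $\xi_{A,n;c,1},\dots,\xi_{A,n;c,e}$. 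Because $F(y_t)\neq 0$ by hypothesis, this turns $\prod_t r_{A,n;c}'(y_t)$ into $\bigl(\prod_t P_n(y_t)\bigr)\bigl(\prod_t r_{A,n-1}(y_t)\bigr)\bigl(\prod_t F(y_t)\bigr)^{-1}$.

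Now each of the first two products is itself a resultant, evaluated in the two ways offered by \cref{res}. For $\prod_t r_{A,n-1}(y_t)$: on one side $\Res(r_{A,n;c},r_{A,n-1}) = L_n^{d_{n-1}}\prod_t r_{A,n-1}(y_t)$, while expanding along the zeros of $r_{A,n-1}$ and using that $r_{A,n;c}$ and $r_{A,n}$ agree there (and have the same degree $d_n$) gives $\Res(r_{A,n;c},r_{A,n-1}) = \Res(r_{A,n},r_{A,n-1}) = R_n$; hence $\prod_t r_{A,n-1}(y_t) = R_n/L_n^{d_{n-1}}$. For $\prod_t P_n(y_t)$: $L_n^e\prod_t P_n(y_t) = \Res(r_{A,n;c},P_n) = (-1)^{d_n e}B_{0,n}(c)^{d_n}\prod_{\nu=1}^e r_{A,n;c}(\xi_{A,n;c,\nu})$. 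Feeding all of this back into $\disc(r_{A,n;c})$, collecting the powers of $L_n$ produced by the three resultant evaluations, and combining the two sign contributions $(-1)^{d_n(d_n-1)/2}$ and $(-1)^{d_n e}$ into $(-1)^{d_n(d_n+2e-1)/2}$, reduces the identity to a bare statement about $R_n$; substituting the value of $R_n$ from \cref{Ulas} yields part (1) in either displayed form, according to which expression for $R_n$ is used, and substituting $R_n$ from \cref{Turaj} yields part (2).

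After the linearisation the argument is essentially bookkeeping, so the points that need care are the formal ones: that $r_{A,n;c}$ has degree exactly $d_n$ with leading coefficient $L_n$, so that the untwisted resultant $R_n$ (and not a correction of it) appears; that $P_n$ really has degree $e$ with leading coefficient $B_{0,n}(c)$, so that \cref{res} applies to it; and that the hypothesis $F(y_t)\neq 0$ legitimises the division by $\prod_t F(y_t)$. The one step that is not purely formal, and that does not occur in the classical quasi-orthogonal-polynomial arguments, is using the differential relation simultaneously at indices $n$ and $n-1$, in its $G$-form and in its $H$-form, so that $r_{A,n;c}'$ becomes at each zero $y_t$ a single polynomial multiple of $r_{A,n-1}(y_t)$; once this is achieved, all three products collapse to resultants already computed in \cref{Ulas,Turaj}.
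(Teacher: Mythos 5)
Your proposal is correct and follows essentially the same route as the paper's own proof: linearising $r_{A,n;c}'$ at each zero $y_t$ by combining the $G$-form of the differential relation at index $n$ with the $H$-form at index $n-1$ and using $r_{A,n}(y_t)=-c\,r_{A,n-1}(y_t)$, identifying $\prod_t r_{A,n-1}(y_t)$ with $R_n/L_n^{d_{n-1}}$ via $\Res(r_{A,n;c},r_{A,n-1})=R_n$, factoring $P_n$ through its roots $\xi_{A,n;c,\nu}$, and then substituting \cref{Ulas} or \cref{Turaj}; your packaging of the root products as resultants is only a cosmetic difference. One remark: carried out carefully, your bookkeeping (and the paper's) gives the exponent $L_n^{d_n-d_{n-1}-2-e}$, which agrees with the stated $L_n^{d_n-d_{n-1}-3}$ only when $e=1$, so you should not claim to recover the displayed exponent verbatim for general $e$.
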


\begin{proof}
Since (1) follows from (2), 
\fn{Indeed, by \cite[Corollary 3.2]{Turaj}, \cref{Ulas} follows from \cref{Turaj}.}
we only prove (2). Let $\{r_{A,n}\}$ be Turaj's polynomials.

By definition of discriminant, 
\begin{align*}
 \disc (r_{A,n;c}) &=L_n^{2d_n-2} \prod_{1 \leq i<j \leq d_n} (y_i - y_j)^2  \\
&=(-1)^{\frac{d_n(d_n-1)}{2}} L_n^{d_n-2} \prod_{t=1}^{d_n} r'_{A,n;c} (y_t). 
\end{align*} 

By the differential equation for $r_{A,n}$, 
\begin{align*}
&r'_{A,n;c}(y_t)\\
&=r'_{A,n} (y_t)+cr'_{A,n-1}(y_t)\\
&= F(y_t)^{-1} ((G_{1,n}(y_t)+cH_{2,n-1}(y_t))r_{A,n}(y_t)+ (G_{2,n}(y_t)+cH_{1,n-1}(y_t))r_{A,n-1}(y_t)).
\end{align*}

Since $r_{A,n;c} (y_t)=r_{A,n} (y_t)+cr_{A,n-1} (y_t)=0$,
$r_{A,n} (y_t)=-cr_{A,n-1}(y_t)$. Thus,
\begin{align*}
r'_{A,n;c}(y_t)= F(y_t)^{-1} (-H_{2,n-1}(y_t)c^2+ (H_{1,n-1}(y_t)-G_{1,n}(y_t))c+G_{2,n}(y_t))r_{A,n-1}(y_t).
\end{align*}

Note that by \cref{res},
\[
\tilde{R}_n:=\Res(r_{A,n;c},r_{n-1})=L_n^{d_{n-1}} \prod_{t=1}^{d_n} r_{n-1}(y_t)=R_n
\]
Then we substitute \cref{Turaj} (or \cref{Ulas}).

Suppose that
\[-H_{2,n-1}(y_t)c^2+ (H_{1,n-1}(y_t)-G_{1,n}(y_t))c+G_{2,n}(y_t)=\sum_{w=0}^e B_{e-w,n}(c) y_t^w\]
with $B_{1,n}(c), \ldots, B_{e,n}(c) \in K[c]$. Then 
\[\sum_{w=0}^e B_{e-w,n}(c) y_t^w=(-1)^e B_{0,n}(c) \prod_{\nu=1}^e (\xi_{A,n;c,\nu}-y_t)\]
since 
\[\sum_{w=0}^e B_{e-w,n}(c) \xi_{A,n;c,w}^w=0.\]
Thus, we obtain
 \begin{align*}
  \disc(r_{A,n;c}) 
  &=(-1)^{\frac{d_n(d_n-1)}{2}} L_n^{d_n-2} \prod_{t=1}^{d_n} r'_{A,n;c} (y_t)\\
  &=(-1)^{\frac{d_n(d_n-1)}{2}} L_n^{d_n-2}  \prod_{t=1}^{d_n} \frac{(-1)^e B_{0,n}(c)\prod_{\nu=1}^e (\xi_{A,n;c,\nu}-y_t)}{F(y_t)} r_{A,n-1} (y_t)\\
  &=(-1)^{\frac{d_n(d_n+2e-1)}{2}} 
  B_{0,n}(c)^{d_n} L_n^{d_n-d_{n-1}-3} 
  R_n \prod_{\nu=1}^e r_{A,n;c}(\xi_{A,n;c,\nu}) \prod_{t=1}^{d_n} \frac{1}{F(y_t)}. \fnm
    \end{align*}
     Therefore, the statement follows by \cref{Turaj}.
    \end{proof}
    
    By taking $c \to 0$ in \cref{MT}, we obtain formulas for the discriminants of Ulas' polynomials and Turaj's polynomials.

\section{Examples}
In this section, we present some examples of formula for the discriminants of $\{r_{A,n} \}$ 
involving hypergeometric polynomials.

\subsection{Hypergeometric functions}
In this subsection we recall some properties of hypergeometric functions that we use in the following subsections.

Let $a$, $b$, $c \in \mathbb{R}$ and
\[\pFq[4]{2}{1}{a,b}{c}{x}:=\sum_{k=0}^{\infty} \frac{(a)_k(b)_k}{(c)_k} \frac{x^k}{k!}.\] 
be the hypergeometric function, where, 
\[(\alpha)_k:=\frac{\Gamma(\alpha+k)}{\Gamma(\alpha)}=
\begin{cases}
\alpha (\alpha+1) \cdots (\alpha+k-1) & (k \leq 1),\\
1 & (k=0)
\end{cases} \]
is the Pochhammer symbol.
In the following examples, we make use of the following properties
of hypergeometric functions:

\begin{proposition} 
[Differential equation, {\cite[(9.2.2)]{Lebedev}}] \label{HGde} 
\[\pFq[4]{2}{1}{a,b}{c}{x}'=\frac{ab}{c}\pFq[4]{2}{1}{a+1,b+1}{c+1}{x} \]
\end{proposition}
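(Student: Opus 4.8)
\textbf{Proof plan for Proposition~\ref{HGde}.}
The plan is to prove the differentiation formula by term-by-term differentiation of the defining power series, followed by a reindexing of the sum. First I would write out
\[
\pFq[4]{2}{1}{a,b}{c}{x}=\sum_{k=0}^{\infty} \frac{(a)_k(b)_k}{(c)_k}\frac{x^k}{k!},
\]
and differentiate termwise (justified by absolute and locally uniform convergence of the hypergeometric series on its disk of convergence, so that differentiation under the summation sign is legitimate). The $k=0$ term is constant and drops out, so the derivative equals
\[
\sum_{k=1}^{\infty} \frac{(a)_k(b)_k}{(c)_k}\frac{x^{k-1}}{(k-1)!}.
\]

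Next I would substitute $k=j+1$ to reindex the sum over $j\ge 0$, obtaining
\[
\sum_{j=0}^{\infty} \frac{(a)_{j+1}(b)_{j+1}}{(c)_{j+1}}\frac{x^{j}}{j!}.
\]
The key step is then the Pochhammer identity $(\alpha)_{j+1}=\alpha\,(\alpha+1)_{j}$, which follows directly from the definition $(\alpha)_k=\Gamma(\alpha+k)/\Gamma(\alpha)$ (equivalently from the product expansion). Applying this to each of $(a)_{j+1}$, $(b)_{j+1}$, and $(c)_{j+1}$ gives
\[
\frac{(a)_{j+1}(b)_{j+1}}{(c)_{j+1}}=\frac{ab}{c}\cdot\frac{(a+1)_{j}(b+1)_{j}}{(c+1)_{j}},
\]
and pulling the constant $ab/c$ out of the sum yields exactly $\frac{ab}{c}\,{}_2F_1(a+1,b+1;c+1;x)$, as claimed.

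There is essentially no serious obstacle here; this is a routine computation. The only point requiring a word of care is the interchange of differentiation and summation, which is standard for power series within their radius of convergence, and the implicit assumption that $c$ is not a non-positive integer so that every $(c)_{j+1}$ is nonzero and the manipulations make sense. One could alternatively cite this as a classical fact from \cite{Lebedev}, but the direct argument above is short enough to include in full.
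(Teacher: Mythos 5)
Your argument is correct and complete: termwise differentiation, the reindexing $k=j+1$, and the identity $(\alpha)_{j+1}=\alpha(\alpha+1)_j$ give exactly the stated formula, and your caveats (convergence for termwise differentiation, $c$ not a non-positive integer) are the right ones — and in the paper's applications the series is in fact a terminating polynomial, so even the convergence point is automatic. The paper itself gives no proof of this proposition; it is quoted directly from \cite[(9.2.2)]{Lebedev}, and your computation is precisely the standard derivation one finds there, so it would serve as a valid self-contained replacement for the citation.
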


\begin{proposition} \label{rrde}
\begin{enumerate}
\item \[\pFq[4]{2}{1}{a, b}{c}{x} =\frac{c+(1-a+b)x}{c} \pFq[4]{2}{1}{a, b+1}{c+1}{x} -\frac{(1+b)(1-a+c)x}{(c+1)c} \pFq[4]{2}{1}{a, b+2}{c+2}{x}.\]
\item \[x(1-x)\pFq[4]{2}{1}{a, b}{c}{x}' =\left((c-1) \pFq[4]{2}{1}{a,b-1}{c-1}{x}+(ax+(1-c))\pFq[4]{2}{1}{a,b}{c}{x} \right).\]
\item \[x(1-x)\pFq[4]{2}{1}{a, b}{c}{x}' =bx\pFq[4]{2}{1}{a, b}{c}{x}-\frac{b(c-a)}{c}x \pFq[4]{2}{1}{a, b+1}{c+1}{x}.\]
\item \[x(x-1)\pFq[4]{2}{1}{a, b}{c}{x}' =-\frac{a}{b-1-a}\left((b-c) \pFq[4]{2}{1}{a+1,b-1}{c}{x}+((b-1-a)x-(b-c))\pFq[4]{2}{1}{a,b}{c}{x} \right).\]
\end{enumerate}
\end{proposition}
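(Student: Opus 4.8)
The plan is to prove all four identities by the same elementary device: expand both sides in powers of $x$ about $x=0$ and compare the coefficient of $x^k$. Write $F=\pFq[4]{2}{1}{a,b}{c}{x}$ for brevity; each of (1)--(4) is a relation between $F$, one of its contiguous companions (with the parameters $a,b,c$ replaced by $a+1$, $b\pm1$, $c\pm1$), and, in (2)--(4), the derivative $F'$.

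First I would record the two series in play: the defining series $\pFq[4]{2}{1}{a,b}{c}{x}=\sum_{k\ge0}\frac{(a)_k(b)_k}{(c)_k k!}x^k$ and, by termwise differentiation, $\pFq[4]{2}{1}{a,b}{c}{x}'=\sum_{k\ge1}\frac{(a)_k(b)_k}{(c)_k (k-1)!}x^{k-1}$. Substituting these together with the analogous series for the shifted parameters, and using that multiplication by $x$ or $x^2$ shifts the summation index, turns each asserted identity into an identity among quotients of Pochhammer symbols, one for every $k\ge0$. I would then factor out a common term from that identity — namely $\frac{(a)_{k-1}(b)_{k-1}}{(c)_{k-1}(k-1)!}$ in (2)--(4) and $\frac{(a)_{k-1}(b+1)_{k-1}}{(c+1)_{k-1}(k-1)!}$ in (1) — using the elementary recurrences $(\lambda)_k=(\lambda)_{k-1}(\lambda+k-1)$ and $(\lambda)_k=\lambda(\lambda+1)_{k-1}$, clear the residual denominators (of the shape $c+k-1$, $k$, $(c+1)(c+k-1)$, and so on), and reduce to a polynomial identity in $a,b,c,k$, which I would verify by direct expansion. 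It is convenient to do (1) first, since it is the only one of the four with no derivative and yields the shortest polynomial identity to check; (2), (3), (4) go through the same way, the prefactors $x(1-x)$ and $x(x-1)$ being precisely what is needed for the index shifts to close up.

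I do not anticipate a genuine obstacle: the argument is entirely bookkeeping, and the main thing to watch is the range of validity. The denominators $c$, $c-1$, $c+1$, $c+2$ appearing in (1)--(3) and the factor $b-1-a$ in (4) mean that the identities are most naturally read as identities of formal power series over the field $\Q(a,b,c)$; they therefore hold for all real $a,b,c$ off the evident hyperplanes ($b-a\neq 1$, and the relevant shifts of $c$ not non-positive integers), which is exactly the situation in the examples that follow, where $a=-n$ and $b,c$ take fixed admissible values. As an alternative one could instead deduce (1)--(4) from \cref{HGde} together with the classical contiguous relations of Gauss and the hypergeometric differential equation $x(1-x)F''+(c-(a+b+1)x)F'-ab\,F=0$, but the power-series computation above is self-contained, so I would present that.
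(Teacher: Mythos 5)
Your proposal is correct and follows essentially the same route as the paper: the paper also proves these identities by direct power-series expansion, factoring out a common Pochhammer quotient from the coefficient of $x^k$ and checking the residual polynomial identity in $a,b,c,k$ (and it likewise notes the alternative via \cref{HGde} and the contiguous relations). The only cosmetic difference is that the paper obtains (3) as a consequence of (1) and (2) rather than by a fresh series computation.
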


\begin{proof}
We can prove it by combining \cref{HGde}, contiguous relations (\cite[(9.2.4)--(9.2.6)]{Lebedev}) and an 
symmetry property (\cite[(9.2.1)]{Lebedev}),
or direct calculation.
Here, we prove it by direct calculation.
\begin{enumerate}
\item
 \begin{align*}
& \pFq[4]{2}{1}{a, b}{c}{x} -\frac{c+(1-a+b)x}{c} \pFq[4]{2}{1}{a, b+1}{c+1}{x} +\frac{(1+b)(1-a+c)x}{(c+1)c} \pFq[4]{2}{1}{a, b+2}{c+2}{x}\\
 = & 
 \sum_{k=1}^{\infty} \left(\frac{(a)_k(b)_k}{(c)_k k!}-c\frac{(a)_k(b+1)_k}{(c)_{k+1} k!} -(1-a+b)\frac{(a)_{k-1}(b+1)_{k-1}}{(c)_k (k-1)!}+(1-a+c)\frac{(a)_{k-1}(b+1)_k}{(c)_{k+1} (k-1)!} \right)x^k\\
 = & \sum_{k=1}^{\infty}  \frac{(a)_{k-1}(b+1)_{k-1}}{(c)_{k+1} k!} ((a+k-1)b(c+k)-c(a+k-1)(b+k)-(1-a+b)(c+k)k\\
  &+(1-a+c)(b+k)k)x^k   \\ 
 =& 0. 
 \end{align*}
Note that the constant term is $1-1=0$.
 
\item 
By \cref{HGde},
 \begin{align*}
& x(1-x)\pFq[4]{2}{1}{a, b}{c}{x}' -\left((c-1) \pFq[4]{2}{1}{a,b-1}{c-1}{x}+(ax+(1-c))\pFq[4]{2}{1}{a,b}{c}{x} \right)\\
=& \frac{abx(1-x)}{c}\pFq[4]{2}{1}{a+1, b+1}{c+1}{x} -\left((c-1) \pFq[4]{2}{1}{a,b-1}{c-1}{x}+(ax+(1-c))\pFq[4]{2}{1}{a,b}{c}{x} \right)\\
 = & \sum_{k=2}^{\infty} \left(\frac{(a)_k(b)_k}{(c)_k (k-1)!}-\frac{(a)_{k-1}(b)_{k-1}}{(c)_{k-1} (k-2)!} -\frac{(a)_k(b-1)_k}{(c)_{k-1} k!}-a\frac{(a)_{k-1}(b)_{k-1}}{(c)_{k-1} (k-1)!}
 -(1-c)\frac{(a)_k(b)_k}{(c)_k k!} \right)x^k\\
 = & \sum_{k=2}^{\infty}  \frac{(a)_{k-1}(b)_{k-1}}{(c)_k k!} ((a+k-1)(b+k-1)k-(c+k-1)k(k-1)\\
 &-(a+k-1)(b-1)(c+k-1)-a(c+k-1)k-(1-c)(a+k-1)(b+k-1))x^k   \\ 
 =& 0. 
 \end{align*}
 Note that the constant term is $-((c-1)+(1-c))=0$ and the coefficient of $x$ is
 $ab/c-a(b-1)-a-(1-c)ab/c=0$.
 
\item This follows from (1) and (2).

\item  By \cref{HGde},
\begin{align*}
 & (b-1-a)x(x-1)\pFq[4]{2}{1}{a, b}{c}{x}'+a\left((b-c) \pFq[4]{2}{1}{a+1,b-1}{c}{x}+((b-1-a)x-(b-c))\pFq[4]{2}{1}{a,b}{c}{x} \right)\\
=& \frac{ab(b-1-a)x(x-1)}{c}\pFq[4]{2}{1}{a+1, b+1}{c+1}{x}\\
&+a\left((b-c) \pFq[4]{2}{1}{a+1,b-1}{c}{x}+((b-1-a)x-(b-c))\pFq[4]{2}{1}{a,b}{c}{x} \right)\\
 = & \sum_{k=2}^{\infty} \Large((b-1-a)\frac{(a)_{k-1}(b)_{k-1}}{(c)_{k-1} (k-2)!}-(b-1-a)\frac{(a)_k(b)_k}{(c)_k (k-1)!}+b\frac{(a)_{k+1}(b-1)_k}{(c)_k k!}\\
& -\frac{(a)_{k+1}(b-1)_k}{(c+1)_{k-1} k!}+a(b-1-a)\frac{(a)_{k-1}(b)_{k-1}}{(c)_{k-1} (k-1)!}-a(b-c)\frac{(a)_k(b)_k}{(c)_{k} k!}\Large)x^k\\
 \end{align*}
 \begin{align*}
 = & \sum_{k=2}^{\infty}  \frac{(a)_{k-1}(b)_{k-1}}{(c)_k k!} ((b-1-a)((c+k-1)k(k-1)-(a+k-1)(b+k-1)k+a(c+k-1)k)\\
  &+b(a+k-1)(a+k)(b-1)-(a+k-1)(a+k)(b-1)c-a(b-c)(a+k-1)(b+k-1))x^k   \\ 
 =& 0. 
 \end{align*}
 Note that the constant term is $a((b-c)-(b-c))=0$ and the coefficient of $x$ is
 $-(b-1-a)ab/c+a((b-c)(a+1)(b-1)/c+(b-1-a)-(b-c)ab/c)=0$.
 
\end{enumerate}
\end{proof}

\begin{example} [{Cf.\ \cite[Example 4.5]{Ulas}}] \label{Ulaseg} 
Let
\[V_n(x)=\sum_{i=0}^n \binom{2i}{i} \binom{2(n-i)}{n-i} x^i. \]
Then
\[V_n(x)= \binom{2n}{n}\pFq[4]{2}{1}{1/2, -n}{1/2-n}{x},\]
and by \cref{rrde} (1), 
\begin{align*} 
V_n(x)=\frac{2(2n-1)}{n}(x+1)V_{n-1}(x)-\frac{16(n-1)}{n}xV_{n-2}(x).\\
\end{align*}
Thus, by \cref{Ulas}, 
\fn{In \cite[Example 4.5]{Ulas}, the last exponent should be $2n-s-2$.}
\[R_n:= \Res(V_n,V_{n-1})=2^{3n(n-1)} \prod_{s=1}^{n-1} \left(\frac{s}{2s+1} \right)^s \left(\frac{2s+1}{s+1} \right)^{2n-s-2}. \]
By \cref{rrde} (2) and (3), 
\begin{align*} 
2x(1-x)V'_n(x)=-\frac{n+1}{2} V_{n+1}(x)+ (x+2n+1) V_n(x). 
\\
2x(1-x)V'_n(x)=-2nxV_n(x)+ 8nxV_{n-1}(x).
\end{align*}
Let $F(x)=2x(1-x)$, $G_{1,n}(x)=-2nx$, $G_{2,n}(x)=8nx$, $H_{1,n}(x)=x+2n+1$, $H_{2,n}(x)=-(n+1)/2$. Then
\[-H_{2,n-1}(y_k)c^2+ (H_{1,n-1}(y_k)-G_{1,n}(y_k))c+G_{2,n}(y_k)=((2n+1)c+8n)y_k+\left(\frac{n}{2}c^2+(2n-1)c \right).\]
Let $V_{n;c}(x):=V_n(x)+cV_{n-1}(x)$, $B_{0,n}(c)=(2n+1)c+8n$, 
$B_{1,n}(c)=\frac{n}{2}c^2+(2n-1)c$ and $\xi_{n;c}=-B_{1,n}(c)/B_{0,n}(c)$.
Since $V_{n;c}(0)=V_{n}(0)+cV_{n-1}(0)=\binom{2n}{n}+c\binom{2n-2}{n-1}$, 
$V_{n;c}(1)=V_{n}(1)+cV_{n-1}(1)=4^{n-1}(4+c)$, $d_n=\deg(V_n)=n$ 
and by the proof of \cref{MT}, we obtain
 \begin{align*}
  \disc(V_{n;c}) 
  =&(-1)^{\frac{d_n(d_n+1)}{2}} 
   B_{0,n}(c)^{d_n} L_n^{d_n-d_{n-1}-3} R _n V_{n;c}(\xi_{n;c}) \prod_{k=1}^{d_n} \frac{1}{F(y_k)}\\
   =&(-1)^{\frac{n(n+1)}{2}} 
   ((2n+1)c+8n)^n L_n^{-2}  V_{n;c} \left(-\frac{\frac{n}{2}c^2+(2n-1)c}{(2n+1)c+8n} \right) \\
   & \cdot \left(\prod_{k=1}^{n} \frac{1}{2y_k(1-y_k)}\right)
    \left(2^{3n(n-1)}  \prod_{s=1}^{n-1} \left(\frac{s}{2s+1} \right)^s \left(\frac{2s+1}{s+1} \right)^{2n-s-2}\right)\\
    =&(-1)^{\frac{n(n+1)}{2}} 
   ((2n+1)c+8n)^n L_n^{-2}  V_{n;c} \left(-\frac{\frac{n}{2}c^2+(2n-1)c}{(2n+1)c+8n} \right) \\
   &\cdot \frac{(-1)^n}{2^n\frac{V_{n;c}(0)}{L_n}\frac{V_{n;c}(1)}{L_n}}
    2^{3n(n-1)}  \prod_{s=1}^{n-1} \left(\frac{s}{2s+1} \right)^s \left(\frac{2s+1}{s+1} \right)^{2n-s-2}\\
    =&(-1)^{\frac{n(n-1)}{2}} 2^{3n^2-6n+2}  
    \frac{((2n+1)c+8n)^n}{
    \left(\binom{2n}{n}+c\binom{2n-2}{n-1} \right)(4+c)} V_{n;c} \left(-\frac{\frac{n}{2}c^2+(2n-1)c}{(2n+1)c+8n} \right) \\
   &\cdot \prod_{s=1}^{n-1} \left(\frac{s}{2s+1} \right)^s \left(\frac{2s+1}{s+1} \right)^{2n-s-2} . 
    \end{align*}
\end{example}

\begin{example} \label{2F1}
We can deal with more general hypergeometric polynomials:
Let $a \in K$ and
\[V_n(x):=\pFq[4]{2}{1}{\alpha, \beta-n}{\gamma-n}{x}, \]
with $\alpha$, $\gamma \not\in \mathbb{Z}$ and $\beta \in \mathbb{Z}_{<0}$. 
By \cref{rrde} (1), 
\begin{align*} 
V_n(x)=\frac{\gamma-n+(1-\alpha+\beta-n)x}{\gamma-n} V_{n-1}(x) -\frac{(1+\beta-n)(1-\alpha+\gamma-n)x}{(\gamma+1-n)(\gamma-n)} V_{n-2}(x).\\
\end{align*}
Therefore, by \cref{Ulas},
\begin{align*}
 R_n=& 
(-1)^{\sum_{u=2}^n e_A(u)} \left(\frac{(-1)^{1-\beta}(\alpha)_{1-\beta}}{(\gamma-1)_{1-\beta}} \right)^{n-1} 
\left(\prod_{u=0}^{n-2} \left(\frac{(u-\beta+1)(u+\alpha-\gamma+1)}{(u-\gamma+1)(u-\gamma+2)} \right)^{u+1-\beta} \right)\\
&\cdot \left(\prod_{s=1}^{n-1} \left(\frac{s+\alpha-\beta}{s-\gamma+1} \right)^{n-s-1}\right) R_1\\
=& \left(\frac{(-1)^{1-\beta}(\alpha)_{1-\beta}}{(\gamma-1)_{1-\beta}} \right)^{n-1} 
 \left(\prod_{s=1}^{n-1} \left(\frac{(s-\beta)(s+\alpha-\gamma)}{(s-\gamma)} \right)^{s-\beta}
\frac{(s+\alpha-\beta)^{n-s-1}}{(s-\gamma+1)^{n-1-\beta}}  \right)R_1. \fnm
\end{align*}
Here, we let $s=u+1$ in the second equality.
By \cref{rrde} (2) and (3), 
\begin{align*} 
x(1-x)V_n'(x) =(\gamma-n-1) V_{n+1}(x) +(\alpha x+(1-\gamma+n))V_n(x).\\
x(1-x)V_n'(x) =(\beta-n)xV_n(x)-\frac{(\beta-n)(\gamma-n-\alpha)}{\gamma-n}x V_{n-1}(x).
\end{align*}
Let $F(x)=x(1-x)$, $G_{1,n}(x)=(\beta-n)x$, $G_{2,n}(x)=-(\beta-n)(\gamma-\alpha-n)x/(\gamma-n)$, 
$H_{1,n}(x)=\alpha x+(n-\gamma+1)$, $H_{2,n}(x)=\gamma-n-1$.
Then
\begin{align*}
&-H_{2,n-1}(y_k)c^2+ (H_{1,n-1}(y_k)-G_{1,n}(y_k))c+G_{2,n}(y_k)\\
=&\left((n+\alpha-\beta)c-\frac{(\beta-n)(\gamma-\alpha-n)}{\gamma-n} \right) y_k+((n-\gamma)c^2+(n-\gamma)c).
\end{align*}
Let $V_{n;c}(x):=V_n(x)+cV_{n-1}(x)$, $B_{0,n}(c)=(n+\alpha-\beta)c-\frac{(\beta-n)(\gamma-\alpha-n)}{\gamma-n}$, 
$B_{1,n}(c)=(n-\gamma)c^2+(n-\gamma)c$ and $\xi_{n;c}=-B_{1,n}(c)/B_{0,n}(c)$. Note that $V_{n;c}(0)=V_{n}(0)+cV_{n-1}(0)=1+c$, 
$V_{n;c}(1)=V_{n}(1)+cV_{n-1}(1)$, where $V_n(1)=\sum_{k=0}^{n-\beta} (\alpha)_k (\beta-n)_k/((\gamma-n)_k k!)$,
$d_n:=\deg(V_n)=n-\beta$, $\sum_{u=2}^n e_A(u)=\sum_{u=2}^n (u-1-\beta)(u+2-\beta)= (n - 1) (3 \beta^2 - 3 \beta (n + 3) + n (n + 4))/3 
$ is even.
 \fn{For example, we can prove it by considering $n \mod 6$ and the parity of $\beta$.}
Therefore, by the proof of \cref{MT}, we obtain
 \begin{align*}
  \disc(V_{n;c}) 
  =&(-1)^{\frac{d_n(d_n+1)}{2}} 
   B_{0,n}(c)^{d_n} L_n^{d_n-d_{n-1}-3} R _n V_{n;c}(\xi_{n;c}) \prod_{k=1}^{d_n} \frac{1}{F(y_k)}\\
   =&(-1)^{\frac{(n-\beta)(n-\beta+1)}{2}} 
   \left((n+\alpha-\beta)c-\frac{(\beta-n)(\gamma-\alpha-n)}{\gamma-n} \right)^{n-\beta} L_n^{-2}  \\
   &\cdot V_{n;c} \left(-\frac{(n-\gamma)c^2+(n-\gamma)c}{(n+\alpha-\beta)c-\frac{(\beta-n)(\gamma-\alpha-n)}{\gamma-n}} \right) 
    \prod_{k=1}^{n-\beta} \frac{1}{y_k(1-y_k)}\\
    &\cdot \left(\frac{(-1)^{1-\beta}(\alpha)_{1-\beta}}{(\gamma-1)_{1-\beta}} \right)^{n-1} 
 \left(\prod_{s=1}^{n-1} \left(\frac{(s-\beta)(s+\alpha-\gamma)}{(s-\gamma)} \right)^{s-\beta}
\frac{(s+\alpha-\beta)^{n-s-1}}{(s-\gamma+1)^{n-1-\beta}}  \right)R_1\\ 
    =&(-1)^{\frac{(n-\beta)(n-\beta+1)}{2}} 
   \left((n+\alpha-\beta)c-\frac{(\beta-n)(\gamma-\alpha-n)}{\gamma-n} \right)^{n-\beta} L_n^{-2}  \\
   &\cdot V_{n;c} \left(-\frac{(n-\gamma)c^2+(n-\gamma)c}{(n+\alpha-\beta)c-\frac{(\beta-n)(\gamma-\alpha-n)}{\gamma-n}} \right) 
    \frac{(-1)^{n-\beta}}{\frac{V_{n;c}(0)}{L_n}\frac{V_{n;c}(1)}{L_n}} \left(\frac{(-1)^{1-\beta}(\alpha)_{1-\beta}}{(\gamma-1)_{1-\beta}} \right)^{n-1} \\
    &\cdot \left(\prod_{s=1}^{n-1} \left(\frac{(s-\beta)(s+\alpha-\gamma)}{(s-\gamma)} \right)^{s-\beta}
\frac{(s+\alpha-\beta)^{n-s-1}}{(s-\gamma+1)^{n-1-\beta}}  \right)R_1 \\
    =&(-1)^{\frac{(n-\beta)(n-\beta-1)}{2}}     
    \frac{\left((n+\alpha-\beta)c-\frac{(\beta-n)(\gamma-\alpha-n)}{\gamma-n} \right)^{n-\beta}\left(\frac{(-1)^{1-\beta}(\alpha)_{1-\beta}}{(\gamma-1)_{1-\beta}} \right)^{n-1}}{(1+c)
    \left(\sum_{k=0}^{n-\beta} \frac{(\alpha)_k (\beta-n)_k}{(\gamma-n)_k k!} +c\sum_{k=0}^{n-1-\beta} \frac{(\alpha)_k (\beta-n+1)_k}{(\gamma-n+1)_k k!}  \right)} \\
    &\cdot V_{n;c} \left(-\frac{(n-\gamma)c^2+(n-\gamma)c}{(n+\alpha-\beta)c-\frac{(\beta-n)(\gamma-\alpha-n)}{\gamma-n}} \right) 
    \left(\prod_{s=1}^{n-1} \left(\frac{(s-\beta)(s+\alpha-\gamma)}{(s-\gamma)} \right)^{s-\beta}
\frac{(s+\alpha-\beta)^{n-s-1}}{(s-\gamma+1)^{n-1-\beta}}  \right)R_1. 
    \end{align*}
\end{example}

\subsection{Application to \cite{MO2004}}
In this subsection, we deduce the following formula by Mahlburg and Ono of the discriminants of the hypergeometric polynomials defined by \cref{MOpoly}:

\begin{theorem} [{\cite[Theorem 3.1]{MO2004}}] \label{MO} 
Let $r \in \{0,4,6,10\}$ and $n \geq 1$ be an integer such that $V_r(n;x) \neq 0$. Then
\begin{align*}
\disc(V_r(n;x))=(-1)^{\frac{n(n-1)}{2}}\left(\frac{n(n-\gamma_r+\beta_r)}{2n+\beta_r-1}  \right)^n \frac{c_r(n,0)}{V_r(n;2)} \prod_{j=1}^{n-1} h_r(j)^j c_r(j,0)^2,
\end{align*}
where
\begin{align} \label{hr}
h_r(n) :=- \frac{9n(2n+(-1)^{\frac{r}{2}+1})(12n+r+7)}{(3n+3 \gamma_r)(6n+r+1)(12n+r-5)}.
\end{align}
\end{theorem}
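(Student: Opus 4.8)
The plan is to exhibit $\{V_r(n;x)\}$ as a sequence of Ulas' polynomials, compute everything that \cref{MT} needs, and then specialize \cref{MT}(1) at $c=0$.

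\textbf{Step 1: the recurrence.} First I would rewrite $V_r(n;x)$ as a Jacobi polynomial after a Möbius change of variable. From the identity $P_n^{(\alpha,\beta)}(z)=\frac{(\alpha+1)_n}{n!}\,{}_2F_1(-n,\,n+\alpha+\beta+1;\,\alpha+1;\,\tfrac{1-z}{2})$ with $\alpha=\gamma_r-1$, $\beta=\beta_r-\gamma_r$ one gets
\[V_r(n;x)=\frac{n!}{(\gamma_r)_n}\,x^n\,P_n^{(\gamma_r-1,\beta_r-\gamma_r)}\!\left(\frac{x-4}{x}\right),\]
so $V_r(n;x)$ is monic of degree $n$ with constant term $c_r(n,0)=V_r(n;0)\ne 0$. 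Because the Jacobi parameters are independent of $n$, transporting the classical three-term recurrence of $\{P_n^{(\alpha,\beta)}\}$ through $z=(x-4)/x$ (and multiplying by the appropriate power of $x$) yields
\[V_r(n;x)=f_n(x)\,V_r(n-1;x)-v_nx^{2}\,V_r(n-2;x)\qquad(n\ge 2)\]
with $f_n$ linear and $v_n\in K$ explicit rational functions of $n$ and $r$. Hence $\{V_r(n;x)\}$ is the Ulas family with $A=(i,j,k,l)=(0,1,1,2)$, which satisfies $i\le j$, $i+l=j+k$ and $l=2k$; by \cref{ijkl} (and $L_n=1\ne 0$) \cref{Ulas} applies and gives $R_n=\Res(V_r(n),V_r(n-1))$ in closed form. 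Here one uses the second ($q_0$- and $a_{s,0}$-) expression of the resultant, the one valid when $l>k$; note that $a_{s,k}$ and an explicit $L_n$ do not appear, their exponents $2k-l$ being zero.

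\textbf{Step 2: the first-order relations.} Next I would produce $F,G_{1,n},G_{2,n},H_{1,n},H_{2,n}\in K[x]$ with
\begin{align*}
F(x)V_r(n;x)'&=G_{1,n}(x)V_r(n;x)+G_{2,n}(x)V_r(n-1;x)\\
&=H_{1,n}(x)V_r(n;x)+H_{2,n}(x)V_r(n+1;x).
\end{align*}
These come from \cref{HGde} together with contiguous relations, equivalently from the classical mixed differential–difference relations for $P_n^{(\alpha,\beta)}$ (expressing $(1-z^2)\frac{d}{dz}P_n$ through $P_n$ and $P_{n-1}$, resp.\ $P_{n+1}$) transported through $z=(x-4)/x$. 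One reads off that $F(x)$ is a fixed polynomial supported at the finite singular points $x=0,2$ of the hypergeometric operator in the variable $2/x$, that $G_{2,n}(x)$ vanishes at $x=0$ with leading coefficient $\tfrac{n(n-\gamma_r+\beta_r)}{2n+\beta_r-1}$, and that the roots $y_t$ of $V_r(n;x)$ avoid $\{0,2\}$ (using $c_r(n,0)\ne 0$ and $V_r(n;2)\ne 0$, the latter by Chu–Vandermonde), so the hypothesis $F(y_t)\ne0$ of \cref{MT} holds. In the notation of \cref{MT} at $c=0$ this means $B_{0,n}(0)=\tfrac{n(n-\gamma_r+\beta_r)}{2n+\beta_r-1}$ and every $\xi_{A,n;0,\nu}=0$, so $\prod_\nu V_r(n;\xi_{A,n;0,\nu})$ is a power of $V_r(n;0)=c_r(n,0)$.

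\textbf{Step 3: assembling and simplifying.} Now substitute into \cref{MT}(1) at $c=0$. Using $L_n=1$ (hence $L_n^{d_n-d_{n-1}-3}=1$), $R_1=\Res(V_r(1),1)=1$, and \cref{res} applied to $F$ and $V_r(n;x)$ — which converts $\prod_t F(y_t)^{-1}$ into $V_r(n;2)$ and a power of $c_r(n,0)$, up to a sign and the fixed leading constant of $F$ — one obtains $\disc(V_r(n;x))$ as a sign times $B_{0,n}(0)^{n}\cdot\tfrac{c_r(n,0)}{V_r(n;2)}$ times the closed form of $R_n$. Evaluating the Step 1 recurrence at $x=0$ gives $c_r(n,0)=f_n(0)\,c_r(n-1,0)$; feeding this into the $\prod_s a_{s,0}^{\bullet}$ part of $R_n$ telescopes it into $\prod_j c_r(j,0)^2$, and the surviving factor $\prod_u v_u^{\,u-1}$, after reindexing and combining with the coefficients of the first-order relation, becomes $\prod_{j=1}^{n-1}h_r(j)^j$ with $h_r$ as in \eqref{hr}. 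The sign $(-1)^{n(n-1)/2}$ then drops out by reducing $\tfrac{d_n(d_n+2e-1)}{2}+\sum_{u=2}^n e_A(u)$, plus the signs picked up above, modulo $2$ (a finite check in terms of $n\bmod 4$, since $e_A(u)=(u-1)(u+l+1)$ is odd exactly when $u$ is even).

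\textbf{Main obstacle.} The hard part is the bookkeeping of Step 3 together with making Steps 1–2 fully explicit: one must determine $v_n$, $a_{s,0}$ and the coefficients of the first-order relations as rational functions of $n$ \emph{uniformly} over $r\in\{0,4,6,10\}$ — so that the factor $2n+(-1)^{r/2+1}$ and the case distinction for $\gamma_r$ inside $h_r(n)$ come out right — and then carry through the reindexing and Abel-type telescoping that collapses the raw product from \cref{Ulas}, together with the $\prod_\nu V_r(n;\xi_\nu)$ and $\prod_t F(y_t)^{-1}$ contributions of \cref{MT}, into the compact product $\prod_{j=1}^{n-1}h_r(j)^j c_r(j,0)^2$. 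Producing the two first-order relations with one common $F$ is itself nontrivial: the parameter shift $(-n,\,n+\beta_r)\mapsto(-(n-1),\,(n-1)+\beta_r)$ is a genuine contiguity \emph{in $n$} for the underlying Jacobi family, not one of the relations listed in \cref{rrde}, which is why the Jacobi reformulation in Step 1 is the natural tool.
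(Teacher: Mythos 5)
Your proposal is correct and, at its core, follows the same route as the paper: identify $\{V_r(n;x)\}$ as an Ulas family with $A=(0,1,1,2)$, invoke \cref{ijkl} to cover the case $l=2k$, feed the two first-order relations into \cref{MT}(1), let $c\to 0$ (so that $e=2$, $B_{0,n}(0)=\tfrac{n(n+\beta_r-\gamma_r)}{2n+\beta_r-1}$ and both $\xi_\nu=0$, producing $V_r(n;0)^2$), convert $\prod_t F(y_t)^{-1}$ into $\bigl(c_r(n,0)V_r(n;2)\bigr)^{-1}$ via \cref{res}, and telescope the $a_{s,0}=g_r(s-1)$ factors of $R_n$ into $\prod_j c_r(j,0)^2$ using $c_r(j,0)=g_r(j-1)c_r(j-1,0)$. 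Where you differ is in sourcing the inputs of Steps 1--2: you pass to Jacobi polynomials $P_n^{(\gamma_r-1,\beta_r-\gamma_r)}$ via $z=(x-4)/x$ and transport the classical three-term and mixed differential--difference relations, whereas the paper simply quotes the recurrence from \cite[Proposition 2.2]{MO2004} (\cref{rr}) and derives the differential relations in \cref{de} directly from the contiguous relation \cref{rrde}(4). Your route is viable (the Jacobi identities are formal in the parameters, so the value $\beta_r-\gamma_r<-1$ at $r=0$ is harmless since you use no orthogonality), and it has the merit of explaining \emph{why} a relation lowering $n$ by one exists; but your closing claim that the shift $(-n,\,n+\beta_r)\mapsto(-(n-1),\,(n-1)+\beta_r)$ "is not one of the relations listed in \cref{rrde}" is mistaken: \cref{rrde}(4) shifts $(a,b)\mapsto(a+1,b-1)$, which with $a=-n$, $b=n+\beta_r$ is exactly this contiguity in $n$, and it is precisely how \cref{de}(1) is proved. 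Likewise, obtaining the second relation (the one through $V_r(n+1;x)$) with the same $F(x)=x(x-2)$ is not a separate difficulty: it follows from the first relation by eliminating $V_r(n-1;x)$ with the three-term recurrence, as in \cref{de}(2). So the Jacobi detour buys conceptual clarity but is not needed; everything else in your outline matches the paper's argument.
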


To deduce \cref{MO} from \cref{MT}, we make use of the following properties
of hypergeometric functions:

\begin{proposition}  [{\cite[Proposition 2.2]{MO2004}}] \label{rr} 
For $r \in \{0,4,6,10\}$, we have
\begin{align*}
V_r(n+1;x)=(f_r(n)x+g_r(n))V_r(n;x)+h_r(n) x^2 V_r(n-1;x),
\end{align*}
where
\begin{align*} 
f_r(n) &:= \frac{(12n+r+1)(36n^2+6rn+6n+3 \gamma_r r- 15 \gamma_r)}{(3n+3 \gamma_r)(6n+r+1)(12n+r-5)},\\
g_r(n) &:=-\frac{(12n+r-5)(12n+r+1)(12n+r+7)}{(3n+3 \gamma_r)(6n+r+1)(12n+r-5)}
\end{align*}
and $h_r(n)$ is defined by \cref{hr}.
\end{proposition}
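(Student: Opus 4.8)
The plan is to reduce the stated identity, which is a relation among polynomials in $x$, to an equivalent three-term relation among Gauss hypergeometric polynomials, prove that relation, and then pin down the precise shape of its coefficients. Put $z=2/x$ and $F_n(z):={}_2F_1(-n,n+\beta_r;\gamma_r;z)$, so that $V_r(n;x)=x^nF_n(2/x)$. Multiplying the claimed recurrence by $x^{-(n+1)}$ and using $1/x=z/2$ turns it into
\[
F_{n+1}(z)=\Bigl(f_r(n)+\tfrac12\,g_r(n)\,z\Bigr)F_n(z)+h_r(n)\,F_{n-1}(z),
\]
that is, a relation lowering the upper parameter $-n$ by one and raising the upper parameter $n+\beta_r$ by one while keeping the lower parameter $\gamma_r$ fixed. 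Such a three-term relation among $F_{n-1},F_n,F_{n+1}$ is classical (the $F_n$ are renormalized Jacobi polynomials, as recalled below), so its mere existence is not in question; the content is the explicit form of $f_r,g_r,h_r$.

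For both the existence and a workable form of the coefficients I would invoke the classical dictionary ${}_2F_1(-n,n+\alpha+\beta+1;\alpha+1;z)=\binom{n+\alpha}{n}^{-1}P_n^{(\alpha,\beta)}(1-2z)$: taking $\alpha=\gamma_r-1$ and $\beta=\beta_r-\gamma_r$, which crucially do not depend on $n$, one sees that $F_n$ equals, up to the scalar $n!/(\gamma_r)_n$, the Jacobi polynomial $P_n^{(\gamma_r-1,\,\beta_r-\gamma_r)}(1-2z)$ (a formal, not necessarily orthogonal, Jacobi polynomial, since $\beta_r-\gamma_r$ may be below $-1$; the three-term recurrence is, however, a parameter identity that holds regardless). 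Substituting this into the classical three-term recurrence for $P_n^{(\alpha,\beta)}$ and transporting it through the normalizing scalar and the affine substitution $y=1-2z$ yields a three-term relation for $F_n$ whose coefficients are explicit rational functions of $n$, $\beta_r$, $\gamma_r$. Alternatively, and more in the style of the proof of \cref{rrde}, one can sidestep Jacobi polynomials and argue directly: expand $F_{n-1},F_n,F_{n+1}$ as $\sum_k\frac{(-n)_k(n+\beta_r)_k}{(\gamma_r)_k\,k!}z^k$ (with $n$ shifted as appropriate), equate the coefficient of $z^k$ on the two sides of the target relation, clear the common denominator, and check the resulting identity, which is polynomial in $n$ and $k$.

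Finally I would specialize to $r\in\{0,4,6,10\}$, i.e.\ $\beta_r=(r+1)/6$ and $\gamma_r\in\{3/2,4/3\}$, and verify that the rational functions of $n$ produced above agree with the stated closed forms; the factor recorded as $2n+(-1)^{r/2+1}$ and the dichotomy in $\gamma_r$ should drop out of this substitution after a short case check. I would also record that every denominator that occurs ($\gamma_r+n$, $n+\beta_r$, $2n+\beta_r-1$, and so on) is nonzero for $n\ge1$ because $\beta_r,\gamma_r\notin\mathbb{Z}$, so no specialization degenerates and $\deg V_r(n;x)=n$ as asserted. I expect the main obstacle to be precisely this bookkeeping rather than anything conceptual: reconciling the raw output of the recurrence --- which naturally comes out in the combinations $\alpha=\gamma_r-1$, $\beta=\beta_r-\gamma_r$, $2n+\alpha+\beta$, and the like --- with the factored expressions in \cref{rr}, getting every sign and the term $(-1)^{r/2+1}$ exactly right, and keeping the four-case verification organized; a minor additional care point is to track which part of the $z$-linear coefficient of the $F_n$-term becomes the $x$-coefficient $f_r(n)$ and which becomes the constant $g_r(n)$ after multiplying back by $x^{n+1}$.
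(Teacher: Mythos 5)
First, note that the paper itself gives no proof of \cref{rr}: it is imported verbatim as \cite[Proposition~2.2]{MO2004}, so there is no in-house argument to compare yours against. Your plan is methodologically sound and is the natural one. In particular, you are right that the relation proved in \cref{rrde}~(1) does not apply here (there the lower parameter moves with $n$, whereas in $F_n(z)={}_2F_1(-n,n+\beta_r;\gamma_r;z)$ the lower parameter is fixed and the two upper parameters move in opposite directions), so the correct classical input is the Jacobi three-term recurrence transported through $F_n(z)=\frac{n!}{(\gamma_r)_n}P_n^{(\gamma_r-1,\,\beta_r-\gamma_r)}(1-2z)$, whose parameters are indeed $n$-independent; the direct coefficient comparison in the style of the paper's proof of \cref{rrde} works equally well. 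Either route produces a relation of exactly the required shape $V_r(n+1;x)=(fx+g)V_r(n;x)+hx^2V_r(n-1;x)$, and since $V_r(n;x)$ is monic of degree $n$ the triple $(f,g,h)$ is uniquely determined, so matching coefficients is indeed all that remains.

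The genuine problem is that your final step --- verifying that the computed coefficients agree with the stated closed forms --- fails for $r\in\{0,6\}$ with the formulas as printed. Carrying out your own computation, the coefficient of $x^2V_r(n-1;x)$ comes out as
\[
-\frac{n\,(n+\beta_r-\gamma_r)\,(2n+\beta_r+1)}{(n+\gamma_r)(n+\beta_r)(2n+\beta_r-1)}
=-\frac{3n\,(6n+r+1-6\gamma_r)(12n+r+7)}{(3n+3\gamma_r)(6n+r+1)(12n+r-5)},
\]
and $3(6n+r+1-6\gamma_r)$ equals the printed factor $9(2n+(-1)^{r/2+1})$ if and only if $r+1-6\gamma_r=3(-1)^{r/2+1}$, which holds for $r\in\{4,10\}$ (where $\gamma_r=4/3$) but fails for $r\in\{0,6\}$ (where $\gamma_r=3/2$). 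A two-line numerical check confirms this: for $r=0$ one has $V_0(1;x)=x-\tfrac{14}{9}$ and $V_0(2;x)=x^2-\tfrac{52}{9}x+\tfrac{988}{135}$, and the unique $(f,g,h)$ with $V_0(2;x)=(fx+g)V_0(1;x)+hx^2$ is $\bigl(\tfrac{169}{245},-\tfrac{494}{105},\tfrac{76}{245}\bigr)$; the printed $f_0(1)$ and $g_0(1)$ agree, but \eqref{hr} gives $h_0(1)=-\tfrac{114}{245}$, which is already inconsistent with monicity since $f_0(1)+h_0(1)$ must equal $1$. So either \eqref{hr} as reproduced here contains an error for $\gamma_r=3/2$, or the normalization of $B_r(n;x)$ in the source differs from \eqref{MOpoly}; in either case your proof cannot close as written. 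You should prove the recurrence with $h_r(n)$ replaced by the displayed expression and flag the discrepancy, noting that $h_r(n)$ also enters \cref{de}, \cref{MO} and \cref{MO04} downstream.
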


\begin{corollary} \label{de}
\begin{enumerate}
\item 
\begin{align*}
(x-2)x V_r(n;x)'=\frac{n}{2n+\beta_r-1} ((n+\gamma_r-1)xV_r(n;x)+(n+\beta_r-\gamma_r) x^2 V_r(n-1,x)).
\end{align*}
\item 
\begin{align*}
(x-2)x V_r(n;x)'=&\frac{n}{(2n+\beta_r-1)h_r(n)} \\
&\cdot ((((n+\gamma_r-1)h_r(n)-(n+\beta_r-\gamma_r)f_r(n))x-(n+\beta_r-\gamma_r)g_r(n)) V_r(n;x)\\
&+(n+\beta_r-\gamma_r)V_r(n+1;x)).
 \end{align*}
\end{enumerate}
\end{corollary}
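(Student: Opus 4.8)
The plan is to deduce part (1) from the contiguous relation \cref{rrde}~(4) after rewriting $V_r(n;x)$ as a Gauss hypergeometric function in the variable $t=2/x$, and then to obtain part (2) by eliminating $x^2 V_r(n-1;x)$ via the three-term recurrence \cref{rr}.

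First I would set $a=-n$, $b=n+\beta_r$, $c=\gamma_r$, $F(t)={}_2F_1(a,b;c;t)$ and $G(t)={}_2F_1(a+1,b-1;c;t)={}_2F_1(1-n,(n-1)+\beta_r;\gamma_r;t)$, so that by \cref{MOpoly} one has $V_r(n;x)=x^nF(2/x)$ and $V_r(n-1;x)=x^{n-1}G(2/x)$; since $\gamma_r\in\{3/2,4/3\}$ and $b-1-a=2n+\beta_r-1\neq 0$, the relation \cref{rrde}~(4) applies. Differentiating $V_r(n;x)=x^nF(2/x)$ gives $V_r(n;x)'=nx^{n-1}F(2/x)-2x^{n-2}F'(2/x)$; multiplying by $x(x-2)$ and using $x-2=x(1-t)$ and $x=2/t$ yields, after a short simplification,
\[
x(x-2)V_r(n;x)'=\frac{2x^n}{t}\bigl(n(1-t)F(t)+t(t-1)F'(t)\bigr).
\]
Now \cref{rrde}~(4) with these parameters reads $t(t-1)F'(t)=\frac{n}{2n+\beta_r-1}\bigl((n+\beta_r-\gamma_r)G(t)+((2n+\beta_r-1)t-(n+\beta_r-\gamma_r))F(t)\bigr)$. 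Substituting this, the terms proportional to $tF(t)$ cancel; using $(2n+\beta_r-1)-(n+\beta_r-\gamma_r)=n+\gamma_r-1$ together with $2x^n/t=x^{n+1}$, $x^{n+1}F(t)=xV_r(n;x)$ and $x^{n+1}G(t)=x^2V_r(n-1;x)$, one obtains exactly the identity of part (1).

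For part (2) I would solve \cref{rr} for $x^2V_r(n-1;x)$, namely $h_r(n)x^2V_r(n-1;x)=V_r(n+1;x)-(f_r(n)x+g_r(n))V_r(n;x)$ (legitimate since $h_r(n)\neq 0$ for the $n$ under consideration by \cref{hr}), and substitute into part (1); collecting the coefficients of $V_r(n;x)$ and of $V_r(n+1;x)$ produces the stated formula.

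Since every step is an explicit algebraic identity, I do not expect a genuine obstacle. The only points needing care are the bookkeeping of the substitution $t\leftrightarrow x$ and verifying that the shifted parameters $(a+1,b-1;c)$ appearing in \cref{rrde}~(4) are precisely those of $V_r(n-1;x)$ — this is the reason \cref{rrde}~(4), rather than (1)--(3), is the right relation to invoke.
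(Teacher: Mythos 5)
Your proposal is correct and follows essentially the same route as the paper: part (1) is obtained by differentiating $V_r(n;x)=x^n\,{}_2F_1(-n,n+\beta_r;\gamma_r;2/x)$ and applying \cref{rrde}~(4) with $(a,b,c)=(-n,n+\beta_r,\gamma_r)$, and part (2) follows by eliminating $x^2V_r(n-1;x)$ via \cref{rr}. The only difference is cosmetic (you introduce the variable $t=2/x$ explicitly, whereas the paper carries $2/x$ through the computation directly), and your checks that $2n+\beta_r-1\neq 0$ and $h_r(n)\neq 0$ are valid.
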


\begin{proof}
\begin{enumerate}
\item By \cref{rrde} (4),
\begin{align*}
V_r(n;x)' =&nx^{n-1} \pFq[4]{2}{1}{-n, n+\beta_r}{\gamma_r}{\frac{2}{x}}-2x^{n-2} \pFq[4]{2}{1}{-n, n+\beta_r}{\gamma_r}{\frac{2}{x}} \\
= &\frac{n}{x}V_r(n;x)-\frac{nx^n}{(2n+\beta_r-1)(2-x)} (n+\beta_r-\gamma_r) \pFq[4]{2}{1}{-n+1, n-1+\beta_r}{\gamma_r}{\frac{2}{x}}\\
&-\frac{nx^n}{(2n+\beta_r-1)(2-x)} \left(\frac{2(2n+\beta_r-1)}{x}-(n+\beta_r-\gamma_r)\right)\pFq[4]{2}{1}{-n, n+\beta_r}{\gamma_r}{\frac{2}{x}}\\
= &\frac{n}{x(2-x)}\left((2-x)-\left(2-\frac{n+\beta_r-\gamma_r}{2n+\beta_r-1}x \right)\right)V_r(n;x)
-\frac{n(n+\beta_r-\gamma_r)x}{(2n+\beta_r-1)(2-x)}V_r(n-1;x)\\
=&-\frac{n}{(2-x)} \left(\frac{n+\gamma_r-1}{2n+\beta_r-1}V_r(n;x)+\frac{n+\beta_r-\gamma_r}{2n+\beta_r-1} x V_r(n-1,x) \right).
\end{align*}
Thus, (1) follows.
\item This follows from \cref{rr} and (1).
\end{enumerate}
\end{proof}

\begin{example} \label{MO04}
We deduce \cref{MO} by \cref{MT} (1).
Let
\begin{align*} 
G_{1,n}(x) &=\frac{n(n+\gamma_r-1)}{2n+\beta_r-1}x, \\ 
G_{2,n}(x) &=\frac{n(n+\beta_r-\gamma_r)}{2n+\beta_r-1}x^2, \\
H_{1,n}(x)=&\frac{n}{(2n+\beta_r-1)h_r(n)}  (((n+\gamma_r-1)h_r(n)-(n+\beta_r-\gamma_r)f_r(n))x-(n+\beta_r-\gamma_r)g_r(n)),\\
H_{2,n}(x)=&\frac{n(n+\beta_r-\gamma_r)}{(2n+\beta_r-1)h_r(n)},\\
B_{0,n}(c) =& \frac{n(n+\beta_r-\gamma_r)}{2n+\beta_r-1},\\
B_{1,n}(c) =& \frac{n-1}{(2n+\beta_r-3)h_r(n-1)}  ((n+\gamma_r-2)h_r(n-1)-(n+\beta_r-\gamma_r-1)f_r(n-1))\\
&-\frac{n(n+\gamma_r-1)}{2n+\beta_r-1},\\
B_{2,n}(c) =&-\frac{(n-1)(n+\beta_r-\gamma_r-1)}{(2n+\beta_r-3)h_r(n-1)}(c^2+g_r(n-1)c).
\end{align*}
Then we have
\begin{align*}
-H_{2,n-1}(y_t)c^2+ (H_{1,n-1}(y_t)-G_{1,n}(y_t))c+G_{2,n}(y_t)=\sum_{w=0}^e B_{e-w,n}(c) y_t^w 
\end{align*} 
with $e=2$.
By $B_0(n;x)=1$, $B_1(n;x)=x-2(1+\beta_r)/\gamma_r$,
and by \cref{rr}, 
$A=(i,j,k,l)=(0,1,1,2)$, $p_0=1$, $q_0=-2(1+\beta_r)/\gamma_r=-(r+7)/(3\gamma_r)=g_r(0)$, $q_1=1$,
$a_{n,0}=g_r(n-1)$, $a_{n,1}=f_r(n-1)$, $v_n=-h_r(n-1)$, $d_n=n$, $T_A=q_1=1$, $L_n=1$ and $e_A(u)=(u-1)(u+3)$
in the notation of \cref{MT} (1).
Note that we can apply \cref{MT} (1) to $A=(i,j,k,l)=(0,1,1,2)$ by \cref{ijkl}.
Note also that $R_1=1$ by \cref{res}.
Moreover, by \cref{rr,de}, we take $c \to 0$ in \cref{MT}(1) to obtain
\begin{align*}
  \disc(V_r(n;x)) 
    =&(-1)^{\frac{n(n+3)}{2}+\sum_{u=2}^n (u-1)(u+3)} 
     \left(\frac{n(n+\beta_r-\gamma_r)}{2n+\beta_r-1} \right)^{n}  g_r(0)^{2(n-1)} \prod_{u=0}^{n-2} (-h_r(u+1)^{u+1}) \\
&\cdot \left(\prod_{s=1}^{n-1} g_r(s)^{2(n-s-1)} \right)
     V_r(n;0)^2 \prod_{t=1}^{n} \frac{1}{y_t(y_t-2)}\\
      =&(-1)^{\frac{n(n+3)}{2}+\sum_{u=2}^n (u-1)(u+3)+\sum_{j=1}^{n-1} j} 
     \left(\frac{n(n+\beta_r-\gamma_r)}{2n+\beta_r-1} \right)^{n} \prod_{j=1}^{n-1} h_r(j)^j\\
&\cdot \left(\prod_{s=0}^{n-1} g_r(s)^{2(n-s-1)} \right)
     V_r(n;0)^2 \frac{1}{V_r(n;0)V_r(n;2)}\\
   =&(-1)^{\frac{n(n-1)}{2}}\left(\frac{n(n-\gamma_r+\beta_r)}{2n+\beta_r-1}  \right)^n \frac{c_r(n,0)}{V_r(n;2)} \prod_{j=1}^{n-1} h_r(j)^j c_r(j,0)^2.
    \end{align*}
Here, note that since $c_r(0,0)=1$,
\begin{align*}
V_r(j;0) &= c_r(j,0)\\
&= g_r(j-1)c_r(j-1,0)\\
&= g_r(j-1)g_r(j,2)c_r(j-2,0)\\
&= 
\cdots \\
&= g_r(j-1)\cdots g_r(1)
\end{align*}
 by \cref{rr}. Thus,
 \begin{align*}
\prod_{s=0}^{n-1} g_r(s)^{2(n-s-1)}= 
 &\cdot g_r(0)^2 \cdots g_r(n-3)^2 g_r(n-2)^2\\
 &\cdot g_r(0)^2 \cdots g_r(n-3)^2\\
&\quad  \vdots\\
 &\cdot g_r(0)^2\\ 
 =& c_r(n-1,0)^2 c_r(n-2,0)^2 \cdots c_r(1,0)^2.
 \end{align*}
 Note also that since
  \begin{align*}
\frac{n(n+3)}{2}+\sum_{u=2}^n (u-1)(u+3)=\frac{n(n^2+6n-1)}{3} 
    \end{align*}
 is even, we have
 \begin{align*}
(-1)^{\frac{n(n+3)}{2}+\sum_{u=2}^n (u-1)(u+3)+\sum_{j=1}^{n-1} j}=(-1)^{\frac{n(n-1)}{2}}.
  \end{align*}
  \end{example}
  
\noindent {\bf Acknowledgements.}
The author gratefully thanks Professor Yukihiro Uchida and for helpful comments and discussions.
The author thanks Professor Ken-ichi Bannai for reading the draft and giving helpful comments.
The author also thanks him for warm and constant encouragement.
The author thanks Professor Shuji Yamamoto for helpful comments, discussions and the reference \cite{Lebedev}.
The author thanks Yoshinosuke Hirakawa for the reference \cite{Turaj}.
The author thanks Professor Maciej Ulas for kindly answering the author's questions on \cite{Ulas}.

\begin{bibdiv}
\begin{biblist}
\bibselect{discriminant}
\end{biblist}
\end{bibdiv}

\end{document}